\newtheorem{algorithm}[theorem]{Algorithm}
\newtheorem{remark}[theorem]{Remark}
\def\ocirc#1{\ifmmode\setbox0=\hbox{$#1$}\dimen0=\ht0
    \advance\dimen0 by1pt\rlap{\hbox to\wd0{\hss\raise\dimen0
    \hbox{\hskip.2em$\scriptscriptstyle\circ$}\hss}}#1\else
    {\accent"17 #1}\fi}
\newcommand{\real}{\mathbb{R}}
\newcommand{\bff}{\bm{f}}
\def\sst{\,\arrowvert\,}
\def \bg{\bm{g}}
\def \bx{\bm{x}}
\def \by{\bm{y}}
\def \bw{\bm{w}}
\def \bz{\bm{z}}
\def \bb{\bm{b}}
\def \ba{\bm{a}}
\def \be{\bm{e}}
\def \xhat{\hat{\bm{x}}}
\def \ns{\mathcal{N}}
\def \rng{\mathcal{R}}
\def\rank{\text{rank}}
\def\dv{\text{div}}
\def \dsp{\displaystyle}
\def \bv{\bm{v}}
\def \bu{\bm{u}}
\title{Prestructuring sparse matrices with dense rows and columns via null space methods} 
\author{Jason S.~Howell\thanks{Department of Mathematics, College of Charleston, Charleston,
SC, 29424, USA. email: {\tt howelljs@cofc.edu}}}
\begin{document}
\maketitle

\begin{abstract}
Several applied problems may produce large sparse matrices with a small number of dense rows and/or columns, which can adversely affect the performance of commonly used direct solvers.  By posing the problem as a saddle point system, an unconventional application of a null space method can be employed to eliminate dense rows and columns.  The choice of null space basis is critical in retaining the overall sparse structure of the matrix.   A one-sided application of the null space method is also presented to eliminate either dense rows or columns.  These methods can be considered techniques that modify the nonzero structure of the matrix before employing a direct solver, and may result in improved direct solver performance.  
\end{abstract}

\begin{keywords}null space method, saddle point problem, direct method, dense row, dense column, bordered matrix \end{keywords}

\begin{AMS}65F05, 65N22, 65F50\end{AMS}

\pagestyle{myheadings}
\thispagestyle{plain}
\markboth{JASON HOWELL}{PRESTRUCTURING 
SPARSE MATRICES WITH DENSE ROWS}

\section{Introduction}\label{sec:intro}
Modern direct solvers for large sparse linear systems usually consist of three phases: a symbolic analysis phase, a numerical factorization phase, and a solution phase, with some approaches combining aspects of symbolic and numerical factorization \cite{davisbook}.
Much of the recent advancements in the efficiency of these solvers can be attributed in no small part to advances in algorithms used in the symbolic phase, which examines the {\em structure} of the coefficient matrix, i.e. the pattern of zero/nonzero entries.  A thorough analysis of the structure lends intelligence that helps algorithms  avoid numerical operations on zero entries, order rows and/or columns for a more efficient numerical factorization, reduce the amount of fill (zero entries being converted to nonzeros) in $LU$ factorizations, and results in more efficient data structures and memory allocation.  Thus the structure of the coefficient matrix imparts significant influence on the efficiency in solving a large sparse linear system via direct methods.

This contrasts with iterative methods, the efficiency of which may be influenced greatly by the condition number of the coefficient matrix.  To improve the potential performance of iterative methods, preconditioning techniques have been studied extensively \cite{benziprecon}.  In essence, preconditioning can be thought of as a modification of the linear system prior to solution with the intention of improving iterative solver performance.

A preconditioning analogue for direct solvers would be a technique that modifies the nonzero structure of the coefficient matrix prior to entering the symbolic analysis phase of a modern direct solver, with the intention of realizing gains in solver performance.  This sort of approach can be thought of as a  {\em prestructuring technique}.
One potential pitfall encountered in the symbolic analysis phase of a direct solver is the presence of one or more ``dense'' rows  in an otherwise sparse coefficient matrix.  Here the definition of dense will be taken from \cite{davisbook}: a row of an $n\times n$ matrix is {\em dense} if it contains more than $10\sqrt{n}$ entries.  
The work presented here develops a prestructuring technique that may improve the performance of the direct solver by removing  dense rows (as well as a dense columns) from the structure of the matrix.  

The general approach to this technique will be derived from posing the linear system  as a {\em saddle point} system. 
Several applications of numerical methods result in large, sparse systems of linear equations $M\bu=\bb$ where
\begin{equation}M=
\begin{bmatrix}
A& B_1^T\\B_2&C
\end{bmatrix},\qquad
\bu=
\begin{bmatrix}
\bx\\\by
\end{bmatrix},\qquad  \bb=\begin{bmatrix}
\bff \\ \bg 
\end{bmatrix},
\label{eq:e1}
\end{equation}
where $M\in \real^{(n+m)\times (n+m)}$ is invertible, $n\ge m$, $\dsp A\in \real^{n\times n}$, $B_1, B_2\in \real^{m\times n}$, $C\in \real^{m\times m}$, $\bx, \bff\in\real^{n}$, and $\by, \bg\in \real^m$.  Matrices $M$ with the structure in \eqref{eq:e1} with $B_1\ne B_2$ are referred to as {\em generalized} saddle point matrices.    Saddle point problems arise in several contexts, including computational fluid dynamics and solid mechanics, constrained optimization problems, optimal control, circuit analysis, economics, and finance. 
Consequently, the numerical solution of saddle point problems is a rich field of study -  a comprehensive survey of methods for solving saddle point linear systems is given in \cite{ben051}.  In several of the aforementioned applications, $M$ and/or $A$ often have a  structure that makes modern direct solvers a popular choice.  

Most solution approaches for saddle point problems are designed to exploit the structure of $M$ and properties that $A, B_1$, and $B_2$ might also satisfy. 
Null space methods, also known as reduced Hessian or force methods, constitute one class of solution techniques for saddle point problems when $C=0$.  They have arisen in several contexts, including constrained optimization \cite{colemanbook}, structural and fluid mechanics \cite{plemmons90,arioli2003}, electrical engineering \cite{chua}, and, more recently, mixed finite element approximation of Darcy and Stokes problems \cite{arioli06,leborne071,leborne081, leborne091} and continuum models for liquid crystals \cite{ramage13}.  

Of particular interest in the current context is the situation when $n$ is large, $A$ is sparse, $m$ is very small, $\rank(A)\ge n-m$, and one or both of $B_1, B_2$ contain dense rows.   In this case $M$ may be referred to  as  a ``bordered'' system \cite{govaerts1994,govaertsbook} as the possibly-dense blocks $B_2$ and  $B_1^T$ border the sparse matrix $A$. 
This situation can occur in finite element approximation of PDEs in which a constraint (such as a particular unknown having zero mean over the spatial domain) is enforced on a subset of unknowns via a global scalar Lagrange multiplier.    Systems with this particular structure can also arise in numerical continuation methods for large nonlinear systems of equations which are parametrized with respect to a pseudo-arclength parameter \cite{Keller1977,Keller1978,Keller82,Keller83}.   Other applications that can result in the presence of at least one dense row and/or column in a sparse matrix include optimization, least squares, and circuit analysis \cite{najm2010circuit}.

The general principle behind null space methods is to characterize the null space of the constraint (off-diagonal) operators in a saddle point system and use that characterization to reduce the saddle point system to two significantly smaller systems with nice properties of size $(n-m)\times (n-m)$ and $m\times m$.    Thus, most applications of null space methods are geared towards problems where $n-m$ is small.  Another particular advantage of null space methods is that the matrix $A$ need not be invertible.  
The main drawback of a null space method is the work required to compute null bases for the matrices $B_1$ and $B_2$.  In general this can be a difficult problem \cite{colemanpothen1,colemanpothen2} and even more so when the matrix $Z_k$, whose columns  form a basis for the null space of $B_k$, is required to have additional properties, such as sparsity or orthogonality \cite{gilbertheath87,pothen89}.  The null space method also requires $C=0$.

The objective of this paper is to consider an  unconventional application of a null space method to \eqref{eq:e1} when $m$ is small and at least one of $B_1$ or $B_2$ is dense.  The null space method can  eliminate the  dense row and column from the linear system while retaining as much of the sparse structure in $A$ as possible, thereby preserving properties of $A$ that make it attractive for direct linear solvers.  With the right choice of null bases (of dimension $n-1$) the resulting system can often be solved in a fraction of the time of the original system using direct solvers. 
While a typical application of the null space method is designed to significantly reduce the overall size of the linear systems involved but requires significant overhead in computing the null basis, the application of a null space method described here only reduces the size of the overall system \eqref{eq:e1} by $m+1$ equations and $m+1$ unknowns and the null basis is easy to compute. Therefore, the techniques developed here can be viewed as prestructuring techniques for systems with dense rows or columns before solution with a direct solver.

The method presented here can also be applied when one of $B_1$ or $B_2$ is not dense, and can be applied to matrices with dense rows and/or columns anywhere in the matrix by first performing row or column interchanges to arrive at the structure given in \eqref{eq:e1}.  A ``one-sided'' application of the null space method to systems with $C\ne 0$ will also be developed which, when employed to remove dense rows, can  increase solver performance.

It should be noted that several numerical algorithms for bordered systems arising in continuation methods and other applications have been studied  \cite{chan84,chan84b,chan84d,chansaad85,chan86,
govaerts1991,govaerts1994,calvetti2000} and may be applied to \eqref{eq:e1} when $B_1$ or $B_2$ is dense.  Several of these bordering/block-elimination/deflation methods require finding bases of the left and right null spaces of $A$, or require $C\ne 0$ in \eqref{eq:e1} and a (potentially dense) rank-1 update of $A$.   Other techniques for dealing with dense rows that have arisen in the study of solving linear least squares problems  include splitting algorithms
\cite{sun1995dealing} as well as
matrix stretching \cite{adlers2000,grcar2012}, which increases the size of the coefficient matrix to modify the sparsity pattern, thereby reducing the effect of dense rows.   
However, no existing methods for dealing with dense rows in sparse matrices are known to maintain or reduce the overall size of the problem and retain the overall sparsity and structural properties (such as bandedness) of the remainder of the coefficient matrix.

The rest of this paper is organized as follows.  In Section \ref{sec:graph}, the mathematical background is described, and an example of how a dense row can affect certain aspects of direct solver performance are explored.  In Section \ref{sec:ns}, the general null space method for reducing the system \eqref{eq:e1} is described, as well as a one-sided null space method.  In Section \ref{sec:alg}, the particular approach for constructing the null space basis is given.  Several numerical examples of the method applied to particular problems are presented Section \ref{sec:app}, which demonstrate the utility of the method to varying degrees, and a brief summary is given in Section \ref{sec:sum}.

\section{Background and Motivation}\label{sec:graph}

\subsection{Notation and Definitions}

Throughout this paper, matrices and matrix blocks will be represented  with capital letters (including blocks that have only one row or column), and vectors will be represented as block matrices or with boldface lowercase letters.  The matrix $A$ is comprised of entries $(a_{ij})$, and the $i$th row of $A$ is given by $A_{i*}$.  The $j$th column of $A$ is denoted by $A_{*j}$ or $\ba_j$.  The Euclidean norm of the vector $\bx$ is $\|\bx\|$ and the infinity norm is denoted $\|\bx\|_\infty$.  The number of nonzero entries in a matrix or vector will be denoted by $|\cdot |$.  The rest of the paper will ignore {\em numerical cancellation}, that is, the conversion of nonzero entries to zero entries via numerical operations.

 For a matrix $A\in\real^{n\times m}$, the range (column space) of $A$ is
\[\rng(A)=\{\bv\in\real^n\sst \bv=A\bx \text{ for some } \bx\in \real^m\},\]  and the null space of $A$ is denoted by
\[\ns(A) = \{\bv\in\real^m \sst A\bv=0\}.\]
When the set of vectors in the range or null space come from  a particular subspace $\mathcal{V}$ of $\real^n$ or $\real^m$, these will be denoted by $\rng(A|\mathcal{V})$ or $\ns(A|\mathcal{V})$, respectively.

An $m\times n$ matrix $A$ with $m\ge n$ has the {\em Hall property} if every set of $k$ columns, $1\le k \le n$, contains nonzeros in at least $k$ rows, and $A$ has the {\em strong Hall property} when every set of $k$ columns, $1\le k \le n-1$, contains nonzeros in at least $k+1$ rows.

Graphs are defined by vertex and edge sets.  Vertices will be denoted by number;  $\langle i,j\rangle$ represents a directed or undirected edge between vertices $i$ and $j$, in which case vertices $i$ and $j$ are {\em adjacent}.  The {\em degree} of a vertex $i$ is the number of adjacent vertices, which is also the number of edges that originate or terminate at vertex $i$.  The {\em indegree} and {\em outdegree} are the number of edges leading into and out of a vertex of a directed graph, respectively.

It will be useful to describe the nonzero pattern of a matrix using a bipartite graph: for a general $m\times n$ matrix $A$, the bipartite graph $H(A)$ consists of row vertices $1',2',\ldots,m'$, column vertices $1,2,\ldots,n$, and edges $\langle i',j\rangle$ when $A_{ij}\ne 0$.  In this paper the convention will be to display row vertices along a horizontal line below the horizontal line containing the column vertices.  Additionally, with bipartite graphs, the direction of edges are obvious so arrows will be suppressed.

The {\em layered graph} $L(AB)$ of a matrix product $AB$ is a graph with three rows of vertices: it is the graph $H(B)$ placed on top of the graph $H(A)$ so that the column vertices of $A$ are the row vertices of $B$.  The layered graph is easily generalized to represent the product $A_1A_2\cdots A_r$.  There is a {\em path} from vertex $i$ to vertex $j$ in a directed (or undirected) graph if there is a sequence of vertices $(v_1,v_2,\ldots,v_k)$ such that $v_1=i$, $v_k=j$, and $v_\ell$ is adjacent to $v_{\ell+1}$ for $1\le \ell <k$.

To illustrate how the nonzero pattern of the product of sparse matrices is determined from the associated layered graph, the following result, adapted from Proposition 1 of \cite{cohen96}, will be utilized.

\begin{lemma}\label{lem:cohen}
Let $A\in \real^{m\times n}$ and $B\in\real^{n\times p}$.  Assuming no numerical cancellation,
\begin{enumerate}
\item The number of nonzero entries in the $i$th row of $AB$ equals the number of $B$ column vertices in the layered graph $L(AB)$ of $AB$ that the $i$th row vertex of $A$ reaches via directed paths.
\item The number of nonzero entries in the $i$th column of $AB$ is equal to the number of $A$ row vertices in the layered graph $L(AB)$ of $AB$ that can reach the $i$th column vertex of $B$.
\end{enumerate} 
\end{lemma}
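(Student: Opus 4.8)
The plan is to prove both statements by a direct reduction to how entries of the matrix product $AB$ are formed and then translating that into a reachability statement in the layered graph $L(AB)$. Recall that $(AB)_{ij} = \sum_{k=1}^n a_{ik} b_{kj}$, so, assuming no numerical cancellation, $(AB)_{ij} \ne 0$ if and only if there exists an index $k$ with $a_{ik}\ne 0$ and $b_{kj}\ne 0$. In the layered graph $L(AB)$, which is $H(B)$ stacked on $H(A)$ with the $n$ column vertices of $A$ identified with the $n$ row vertices of $B$, the condition $a_{ik}\ne 0$ is exactly the edge $\langle i', k\rangle$ (from the $i$th row vertex of $A$ to the shared middle vertex $k$), and $b_{kj}\ne 0$ is exactly the edge $\langle k', j\rangle$ in $H(B)$, i.e. a directed edge from middle vertex $k$ to the $j$th column vertex of $B$.

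First I would prove statement 1. Fix a row vertex $i'$ of $A$. The claim is that the $i$th row of $AB$ has a nonzero in column $j$ exactly when $i'$ reaches the $j$th column vertex of $B$ via a directed path in $L(AB)$. Since $L(AB)$ has exactly three layers of vertices and edges only go from the top layer ($B$'s column vertices) down through the middle, or — with the paper's orientation convention — from $A$'s row vertices up to the middle and from the middle up to $B$'s column vertices, every directed path from an $A$-row vertex to a $B$-column vertex has length exactly two and passes through a single middle vertex $k$. Thus $i'$ reaches $j$ if and only if there is some middle vertex $k$ with edges $\langle i',k\rangle$ and $\langle k,j\rangle$, which is precisely the condition $a_{ik}\ne 0$ and $b_{kj}\ne 0$ for some $k$, i.e. $(AB)_{ij}\ne 0$. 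Counting the column vertices $j$ reachable from $i'$ therefore counts the nonzeros in row $i$ of $AB$.

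Statement 2 is the transpose of statement 1: the $i$th column of $AB$ is the $i$th column of $B^T\!A^T$ transposed, and $L(AB)$ read with reversed edge orientation is $L(B^TA^T)$ with the roles of ``row reaches column'' and ``column is reachable from row'' swapped. Concretely, $(AB)_{ji}\ne 0$ for some row index $j$ iff there is $k$ with $a_{jk}\ne 0$ and $b_{ki}\ne 0$, iff the $j$th row vertex of $A$ reaches the $i$th column vertex of $B$, so the number of $A$-row vertices that can reach column vertex $i$ of $B$ equals the number of nonzero entries in column $i$ of $AB$. I would phrase this either as a symmetric repetition of the argument for statement 1 or by invoking $L(AB) = L((B^TA^T)^T)$ and statement 1 applied to $B^TA^T$.

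There is essentially no hard step here; the only thing to be careful about is bookkeeping the identification of $A$'s $n$ column vertices with $B$'s $n$ row vertices and the edge-orientation convention, so that ``directed path from $i'$ to $j$'' unambiguously means the two-edge path $i' \to k \to j$ through a shared middle vertex. Once that identification is stated cleanly, both parts follow immediately from the no-cancellation characterization of nonzeros in a single matrix product. I would also note explicitly that the generalization to $L(A_1A_2\cdots A_r)$ follows by the same argument applied inductively, since a directed path in the $r+1$-layer graph decomposes uniquely into one edge per layer, matching the sum-of-products expansion of $(A_1\cdots A_r)_{ij}$ over intermediate index tuples.
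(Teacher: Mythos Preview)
Your argument is correct. The paper does not actually prove this lemma; it is stated as an adaptation of Proposition~1 of \cite{cohen96} and is followed only by an illustrative example, so there is no in-paper proof to compare against. Your direct approach---identifying a length-two path $i'\to k\to j$ in $L(AB)$ with the simultaneous conditions $a_{ik}\ne 0$ and $b_{kj}\ne 0$, and then invoking the no-cancellation hypothesis to conclude $(AB)_{ij}\ne 0$---is exactly the standard justification and would be an appropriate proof to supply. The transpose/symmetry argument for part~2 is also fine, as is your remark on the inductive extension to $L(A_1\cdots A_r)$.
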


An example of the application of Lemma \ref{lem:cohen} is illustrated below.  In \eqref{eq:ab} the nonzero entries of each of $A$ and $B$ are represented with $\times$, and Figure \ref{fig:lab} gives the layered graph $L(AB)$.  Each possible path from a row vertex of $A$ on the bottom row to a column vertex of $B$ on the top row gives a nonzero entry in $C=AB$, provided there is no numerical cancellation.

\begin{equation}
AB=\begin{bmatrix}
&\times&&\times&\\
\times&&&\times&\times\\
&&\times&&\\
\times&\times&&&\times\\
&&&\times&
\end{bmatrix}\begin{bmatrix}
\times&&&&\\
&\times&&\times&\\
&\times&&&\times\\
&&\times&&\\
\times&\times&&&
\end{bmatrix}=\begin{bmatrix}
&\times&&\times&\\
\times&\times&\times&&\\
&\times&&&\times\\
\times&\times&&\times&\\
&&\times&&
\end{bmatrix}=C.
\label{eq:ab}
\end{equation}
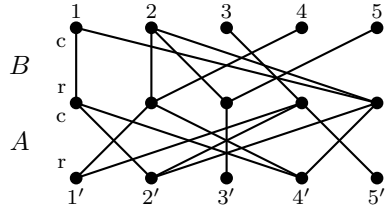
\begin{figure}[H]
\centering
\begin{tikzpicture}[style=thick,scale=1.0]
\draw (1,0)--(2,1);\draw (1,0)--(4,1);
\draw (2,0)--(1,1);\draw (2,0)--(4,1);\draw (2,0)--(5,1);
\draw (3,0)--(3,1);
\draw (4,0)--(1,1);\draw (4,0)--(2,1);\draw (4,0)--(5,1);
\draw (5,0)--(4,1);
\draw (1,1)--(1,2);
\draw (2,1)--(2,2);\draw (2,1)--(4,2);
\draw (3,1)--(2,2);\draw (3,1)--(5,2);
\draw (4,1)--(3,2);
\draw (5,1)--(1,2);\draw (5,1)--(2,2);
\foreach \x in {1,2,3,4,5}{
\filldraw (\x,0) circle (2pt)node[below]{\footnotesize {$\x'$}};
\filldraw (\x,2) circle (2pt) node[above]{\footnotesize {$\x$}};}
\foreach \x in {1,2,3,4,5}{
\filldraw (\x,1) circle (2pt);}
\node at(0.25,0.5){ $A$};
\node at(0.25,1.5){$B$};
\node[above left] at (1,0) {\footnotesize r};
\node[above left] at (1,1) {\footnotesize r};
\node[below left] at (1,1) {\footnotesize c};
\node[below left] at (1,2) {\footnotesize c};
\end{tikzpicture}
\caption{Illustration of the nonzero pattern of $C=AB$ in \eqref{eq:ab} using the layered graph $L(AB)$.}
\label{fig:lab}
\end{figure}

\subsection{Dense Rows/Columns and Direct Solvers}

  While a complete analysis of the effect that a dense row and/or column has on all aspects of particular direct solvers is beyond the scope of this work, a brief discussion and some examples of easily observed phenomena will be presented.  
  
 During symbolic analysis, solvers typically employ directed and/or undirected graph data structures and algorithms for analyzing the nonzero pattern of a sparse matrix.  This in turn is often used to allocate memory to store the $L$ and $U$ factors of the matrix, as well as determine row or column orderings that may reduce the amount of fill encountered during the numerical factorization.  The reference \cite{davisbook} provides a broad summary of data structures and graph algorithms for direct Cholesky, QR, and LU factorization procedures.  
Summarizing several published theoretical results \cite{gilbert94,georgeng85,gilbertng}, Theorems 6.1 and 6.2 of \cite{davisbook} specify the following about the strong Hall matrix $A$, where $PA=LU$ (with $P$ determined by partial pivoting) and $A=QR$ (the QR factorization), assuming no numerical cancellation:
\begin{enumerate}
\item An upper bound for the structure of $U$ is given by the structure of $R$.
\item An upper bound for the structure of $L$ is given by the structure of $V$, the lower trapezoidal matrix of Householder vectors used in the QR factorization of $A$.
\end{enumerate}

An {\em elimination tree} \cite{schreiber82} is a data structure that is used in many aspects of direct solvers, including storage, row and column ordering, and symbolic and numeric factorization.
For a matrix $M$ satisfying the strong Hall property, the {\em column elimination tree}
\cite{eisenstatliu05,pothentoledo} of $M$ is the elimination tree of $M^TM$, and it  can be used to predict potential intercolumn dependencies.  Recent work has pointed to the {\em row merge tree} \cite{oliveira2000,grigori07,grigori09}, based on the {\em row merge matrix} \cite{georgeng87}, as an alternative for structure prediction and can be employed for matrices satisfying only the Hall property.  These trees and related structures  are employed to develop upper bounds on the number of nonzeros in $Q, R, L$, or $U$ by various means.  

However, the presence of a dense row in $M$ could lead to significant, if not catastrophic, fill during the elimination process.  In fact, the presence of a full row in a matrix $M$ implies that $M^TM$ is completely full, which results in a column elimination tree of full height.  Additionally, the row merge matrix of $M$ will also be full.  Together these may lead to overestimates of column dependencies and the number of nonzeros in the $L$ and $U$ factors of $M$ and adversely affect several aspects of solver performance.
Additionally, dense columns may affect algorithms that are used to determine row and column orderings that minimize fill \cite{davisumfpack,davisbook,davisduff,daviscolamd}, especially when $A$ is not SPD or has some zero diagonal entries.

\subsection{Example of Symbolic Analysis with a Dense Row and Column}\label{sec:sym2}
The MATLAB computing environment, which utilizes the UMFPACK solver \cite{davisumfpack} for sparse, square, nonsymmetric matrices, provides a symbolic factorization command ({\tt symbfact(M,'col')}) that returns upper-bound estimates on the number of nonzeros in the numerical factorization of a matrix, as well as the column elimination tree.  Additionally, MATLAB provides a {\tt spparms} setting that can display detailed information about the sparse matrix algorithms employed when the MATLAB linear solve (\verb|\|) is executed.

Assume $m=1$.  Here let $M$ be a (lower-right pointing) {\em arrowhead matrix}, which for $m=1$ consists of a diagonal $A$, dense rows $B_1$ and $B_2$, and nonzero $C$.  Arrowhead matrices, which satisfy the Hall property, arise in several applications \cite{oleary90}, and one solution approach consists of transforming arrowhead matrices to tridiagonal matrices via chasing algorithms \cite{zha92,oliveira98}.  
Set $A=I_n$ in \eqref{eq:e1} and $B_2$ to be a random $n$-vector with entries between $0$ and $1$ (generated by the {\tt sprand()} function) and $C=1$.  The column $B_1^T$ is a full random $n$-vector ($|B_1|=n$) stored in sparse format.  The sparse matrix $M$ is formed and then a symbolic factorization is computed.  The objective of this example is to demonstrate how the density of $B_2$ affects the upper-bound estimates for $L$ and $U$ formed in the numerical factorization of $M$, as well as the height of the column elimination tree.  All computations were performed using MATLAB R2014a.   Table \ref{tab:t1} presents statistics reported by {\tt symbfact()} and {\tt spparms('spumoni',2)} for $n=10^4$.  In the table
\begin{itemize}
\item  ``symb time'' represents the time required for {\tt symbfact()};
\item  $h$ is the height of the column elimination tree;
\item  ``$\overline{|L+U|}$'' represents the symbolic upper bound on the number of nonzero entries in $L+U$;
\item   ``$|L+U|$'' is the true number of nonzero entries once the actual LU factorization in executed, and
\item  ``solve time'' is the time required to execute \verb|x=M\b|, where \verb|b| is a sparse random $n+1$-vector.
\end{itemize}
All timing results are obtained using MATLAB's {\tt tic} and {\tt toc} functions and reported as seconds.
It is observed that the symbolic factorization time, the height of the column elimination tree, and the upper bound on nonzeros in $L+U$ grows in proportion to $|B_2|$ and not $|M|$.  As $|B_2|$ ranges from $0$ to $10^4$, the number of nonzeros in $M$ increases only by 50\%, while the amount of storage allocated for LU factorization increases by a factor of 2500, potentially affecting the performance of the direct solver.  In actuality, the time required for solving a linear system with coefficient matrix $M$ and the true number of nonzeros in $|L+U|$ scale more closely with $|M|$.  For much larger $n$ (as will be reported in Section \ref{sec:app}) however, this may not be the case.
\begin{table}\centering
\begin{tabular}{r|r|r|r|r|r|r}
$|M|$	&	$|B_2|$	&	symb time	&	$h$	&	$\overline{|L+U|}$	&	$|L+U|$	&	solve time	\\\hline
20001	&	0	&	0.0018	&	2	&	20001	&	20001	&	0.00019	\\
20002	&	1	&	0.0017	&	2	&	20002	&	20002	&	0.00036	\\
20011	&	10	&	0.0016	&	11	&	20056	&	20011	&	0.00034	\\
20100	&	99	&	0.0017	&	100	&	24951	&	20100	&	0.00038	\\
20945	&	944	&	0.0182	&	945	&	466041	&	20945	&	0.00060	\\
22211	&	2210	&	0.1059	&	2211	&	2463156	&	22211	&	0.00091	\\
23905	&	3904	&	0.3796	&	3905	&	7642561	&	23905	&	0.00139	\\
25956	&	5955	&	0.9637	&	5956	&	17753991	&	25956	&	0.00189	\\
30001	&	10000	&	2.7159	&	10001	&	50025001	&	30001	&	0.00285	
\end{tabular}
\caption{Solver statistics for $M$ with $n=10^4$ and increasing density of $B_2$.}
\label{tab:t1}
\end{table}

\section{Null Space Methods}\label{sec:ns}

Here the general null space method for \eqref{eq:e1} is outlined.  Further exposition and details can be found in \cite{ben051} for the case $B_1=B_2$ and $C=0$, and \cite{haslinger07} for $B_1\ne B_2$.  The first part of this section assumes $C=0$, and subsequently a variant of the null space method for $C\ne 0$ is described.

\subsection{The Null Space Method for \eqref{eq:e1}}
The following result (Theorem 3.1 of \cite{haslinger07}) establishes the equivalence of certain requirements on the blocks of $M$ in \eqref{eq:e1} with the invertibility of $M$ when $C= 0$.

\begin{theorem}\label{thm:Minv}
The matrix $M=\left[\begin{smallmatrix}
A&B_1^T\\B_2&0
\end{smallmatrix}\right]$ is invertible if and only if $B_1$ has full row rank, 
\begin{equation}
\dsp \ns(A)\cap\ns(B_2)=\{0\}, 
\label{eq:cond1}
\end{equation}
and 
\begin{equation}
\dsp \rng(A|\ns(B_2))\cap\rng(B_1^T)=\{0\}.
\label{eq:cond2}
\end{equation}
\end{theorem}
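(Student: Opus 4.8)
The plan is to prove both directions of the equivalence by working directly with the definition of invertibility, i.e., that $M\bu=\bzero$ forces $\bu=\bzero$ (for a square matrix this suffices). Writing $\bu=\left[\begin{smallmatrix}\bx\\\by\end{smallmatrix}\right]$, the homogeneous system $M\bu=\bzero$ reads $A\bx+B_1^T\by=\bzero$ and $B_2\bx=\bzero$. So a natural first step is to observe that the null space of $M$ is exactly the set of pairs $(\bx,\by)$ with $\bx\in\ns(B_2)$ and $A\bx=-B_1^T\by$. From here the strategy for the ``if'' direction is: given such a pair, note $A\bx\in\rng(A|\ns(B_2))$ and $-B_1^T\by\in\rng(B_1^T)$, so by \eqref{eq:cond2} we get $A\bx=B_1^T\by=\bzero$; then $\bx\in\ns(A)\cap\ns(B_2)=\{0\}$ by \eqref{eq:cond1}, so $\bx=\bzero$; and $B_1^T\by=\bzero$ with $B_1$ of full row rank (so $B_1^T$ has trivial null space) gives $\by=\bzero$. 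That establishes injectivity of $M$, hence invertibility.

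For the ``only if'' direction I would prove the contrapositive for each of the three conditions in turn, exhibiting a nonzero element of $\ns(M)$ (or showing $M$ is not square-invertible) whenever one fails. If $B_1$ does not have full row rank, pick $\bzero\ne\by$ with $B_1^T\by=\bzero$; then $(\bzero,\by)\in\ns(M)$. If \eqref{eq:cond1} fails, pick $\bzero\ne\bx\in\ns(A)\cap\ns(B_2)$; then $(\bx,\bzero)\in\ns(M)$. If \eqref{eq:cond2} fails, pick a nonzero common vector $\bw=A\bx=B_1^T\by$ with $\bx\in\ns(B_2)$; replacing $\by$ by $-\by$ gives $A\bx+B_1^T(-\by)=\bzero$ and $B_2\bx=\bzero$, and the pair $(\bx,-\by)$ is nonzero because $A\bx=\bw\ne\bzero$ forces $\bx\ne\bzero$. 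In all three cases $M$ has a nontrivial kernel, contradicting invertibility.

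The one place that needs a little care — and what I'd flag as the main obstacle — is the dimension/consistency bookkeeping in the ``only if'' argument, specifically making sure the constructed kernel vector is genuinely nonzero and that failure of \eqref{eq:cond2} is being used correctly (the subspaces $\rng(A|\ns(B_2))$ and $\rng(B_1^T)$ live in $\real^n$, and a nonzero intersection gives the $\bw$ above). One should also double-check that, under the three stated conditions, $M$ being square is automatic from the block sizes in \eqref{eq:e1}, so that injectivity indeed yields invertibility; this is immediate since $A$ is $n\times n$, $B_1,B_2$ are $m\times n$, and the zero block is $m\times m$. I expect no genuine difficulty beyond this, since the whole argument is a direct unpacking of the block equations; the result is essentially a restatement of the standard null-space-method solvability conditions, and indeed the excerpt attributes it to Theorem 3.1 of \cite{haslinger07}, so one could alternatively just cite that and sketch the kernel computation as confirmation.
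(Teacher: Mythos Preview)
Your proof is correct in both directions; the kernel computation you outline is exactly the right way to establish the equivalence, and the three contrapositive cases for the ``only if'' direction each produce a genuine nonzero kernel element as claimed. Note, however, that the paper does not supply its own proof of this theorem: it states the result as Theorem~3.1 of \cite{haslinger07} and uses it as a black box to derive Corollary~\ref{cor:b2} and Theorem~\ref{thm:inv}. So there is nothing to compare against; your argument simply fills in what the paper omits by citation, and as you yourself note at the end, the alternative of citing the reference and sketching the kernel computation is precisely what the paper does (minus the sketch).
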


As remarked in \cite{haslinger07}, if  $B_1\in\real^{m\times n}$ has full row rank, then $\dim\rng(B_1^T)=m$, so \eqref{eq:cond2} implies $\dim  \rng(A|\ns(B_2))\le n-m$, and \eqref{eq:cond1} implies  $\dim \rng(A|\ns(B_2))=\dim\ns(B_2)\ge n-m$, and thus $\dim \ns(B_2)=n-m$.
This gives the following result.

\begin{corollary}\label{cor:b2}
If $M=\left[\begin{smallmatrix}
A&B_1^T\\B_2&0
\end{smallmatrix}\right]$ is invertible, then $B_2$ has full row rank.
\end{corollary}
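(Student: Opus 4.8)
The plan is to extract from invertibility of $M$ exactly what Theorem~\ref{thm:Minv} supplies, and then to run the dimension bookkeeping already sketched in the discussion preceding this corollary. Concretely, if $M$ is invertible then Theorem~\ref{thm:Minv} guarantees that $B_1$ has full row rank and that conditions \eqref{eq:cond1} and \eqref{eq:cond2} hold. The corollary then reduces to the single claim $\dim\ns(B_2)=n-m$, after which the rank--nullity theorem applied to $B_2\in\real^{m\times n}$ immediately yields $\rank(B_2)=m$.

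First I would record that $\dim\rng(B_1^T)=m$, which holds precisely because $B_1$ has full row rank. Next, since $\rng(A|\ns(B_2))$ and $\rng(B_1^T)$ are subspaces of $\real^n$ whose intersection is trivial by \eqref{eq:cond2}, their dimensions add to at most $n$, so $\dim\rng(A|\ns(B_2))\le n-m$. Then I would note that \eqref{eq:cond1} says exactly that the map $\bv\mapsto A\bv$ restricted to $\ns(B_2)$ is injective, hence $\dim\rng(A|\ns(B_2))=\dim\ns(B_2)$; combining the two gives $\dim\ns(B_2)\le n-m$. For the reverse inequality, rank--nullity gives $\dim\ns(B_2)=n-\rank(B_2)\ge n-m$, since $B_2$ has only $m$ rows. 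Therefore $\dim\ns(B_2)=n-m$, equivalently $\rank(B_2)=m$, i.e.\ $B_2$ has full row rank.

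There is essentially no hard step here; the corollary is a repackaging of the dimension count in the preceding paragraph, and the only points requiring any care are that trivial intersection of two subspaces of $\real^n$ is what licenses adding their dimensions, and that $\ns(A)\cap\ns(B_2)=\{0\}$ is the same statement as injectivity of $A$ on $\ns(B_2)$, which is what converts the bound on $\dim\rng(A|\ns(B_2))$ into a bound on $\dim\ns(B_2)$. Indeed, the whole argument could be compressed to: by Theorem~\ref{thm:Minv} and the ensuing dimension count, $\dim\ns(B_2)=n-m$, so by rank--nullity $\rank(B_2)=m$.
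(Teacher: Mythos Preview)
Your proof is correct and follows essentially the same approach as the paper: the corollary is indeed presented there as an immediate consequence of the dimension count in the paragraph preceding it, which you have reproduced with slightly more detail (making explicit the use of rank--nullity for the lower bound $\dim\ns(B_2)\ge n-m$). There is nothing to add.
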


The null space method requires
\begin{enumerate}
\item a particular solution $\bx^*$ of $B_2\bx=\bg$;
\item a matrix $Z_1\in \real^{n\times(n-m)}$ whose columns form a basis for $\ns(B_1)$; 
\item a matrix $Z_2\in \real^{n\times(n-m)}$ whose columns form a basis for $\ns(B_2)$.
\end{enumerate}

The algorithm proceeds by setting 
\begin{equation}
\bx:=Z_2\bv+\bx^*
\label{eq:xdef}
\end{equation}
so that 
\[B_2\bx=B_2(Z_2\bv+\bx^*)=B_2Z_2\bv+B_2\bx^*=B_2\bx^*=\bg,\]
as $B_2Z_2=0$.
Substituting $\bx$ into the first equation of \eqref{eq:e1} we have
\begin{equation}
AZ_2\bv+B_1^T\by=\bff-A\bx^*.
\label{eq:ns1}
\end{equation}
Premultiply \eqref{eq:ns1} with $Z_1^T$ to obtain
\[
Z_1^TAZ_2\bv+Z_1^TB_1^T\by=Z_1^T(\bff-A\bx^*),
\]
which reduces to, as $B_1Z_1=0$,
\begin{equation}
Z_1^TAZ_2\bv=Z_1^T(\bff-A\bx^*),
\label{eq:ns2}
\end{equation}
as $B_1Z_1=0$.  This problem has a unique solution, as shown below.

\begin{theorem}\label{thm:inv} Let $M=\left[\begin{smallmatrix}
A&B_1^T\\B_2&0
\end{smallmatrix}\right]$ be invertible and let the columns of $Z_1$ and $Z_2$ span the null spaces of $B_1$ and $B_2$, respectively. Then $Z_1^TAZ_2$ is invertible.
\end{theorem}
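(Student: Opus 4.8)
The plan is to show that $Z_1^T A Z_2$ is a square $(n-m)\times(n-m)$ matrix (this dimension count follows from the remarks after Theorem \ref{thm:Minv} and Corollary \ref{cor:b2}, which give $\dim\ns(B_1)=\dim\ns(B_2)=n-m$), and then argue it is injective as a linear map on $\real^{n-m}$. Since the matrix is square, injectivity is equivalent to invertibility, so the whole task reduces to showing that $Z_1^T A Z_2 \bv = 0$ forces $\bv = 0$.

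First I would take $\bv \in \real^{n-m}$ with $Z_1^T A Z_2 \bv = 0$ and set $\bx := Z_2 \bv$, so that $\bx \in \ns(B_2)$ and $A\bx \in \rng(A|\ns(B_2))$. The condition $Z_1^T A \bx = 0$ says that $A\bx$ is orthogonal to every column of $Z_1$, i.e. $A\bx \perp \ns(B_1)$. The key linear-algebra fact I would invoke is that $\ns(B_1)^\perp = \rng(B_1^T)$, so $A\bx \in \rng(B_1^T)$. Combining the two memberships, $A\bx \in \rng(A|\ns(B_2)) \cap \rng(B_1^T)$, which by \eqref{eq:cond2} forces $A\bx = 0$. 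Hence $\bx \in \ns(A)$, and since also $\bx \in \ns(B_2)$, condition \eqref{eq:cond1} gives $\bx = 0$, that is, $Z_2 \bv = 0$. Because the columns of $Z_2$ are linearly independent (they form a basis for $\ns(B_2)$), this yields $\bv = 0$.

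The only remaining point is to confirm that $Z_1^T A Z_2$ is genuinely square. Here I would cite the dimension argument already spelled out between Theorem \ref{thm:Minv} and Corollary \ref{cor:b2}: invertibility of $M$ together with $B_1$ having full row rank forces $\dim\ns(B_2) = n-m$, and by symmetry (via Corollary \ref{cor:b2} and the analogous reasoning applied to $B_2$) $\dim\ns(B_1) = n-m$ as well. Thus $Z_1, Z_2 \in \real^{n\times(n-m)}$ and $Z_1^T A Z_2 \in \real^{(n-m)\times(n-m)}$. An injective endomorphism of a finite-dimensional space is bijective, completing the proof.

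I expect the main (very mild) obstacle to be bookkeeping around the two orthogonality/range identities — using $\ns(B_1)^\perp = \rng(B_1^T)$ correctly and making sure the intersection condition \eqref{eq:cond2} is applied to the right subspace — rather than anything genuinely deep; once $A\bx$ is pinned into the intersection, the two invertibility conditions from Theorem \ref{thm:Minv} close the argument immediately.
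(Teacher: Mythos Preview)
Your proof is correct and follows essentially the same route as the paper's: both show injectivity by taking $\bv$ with $Z_1^TAZ_2\bv=0$, using $\ns(B_1)^\perp=\rng(B_1^T)$ to place $AZ_2\bv$ in the intersection $\rng(A|\ns(B_2))\cap\rng(B_1^T)$, applying \eqref{eq:cond2} and then \eqref{eq:cond1} to force $Z_2\bv=0$, and finally using that the columns of $Z_2$ are independent. The only cosmetic difference is that the paper concludes via the invertibility of $Z_2^TZ_2$ rather than citing linear independence directly, and it leaves the square-dimension check implicit.
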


\begin{proof}  Let $\bv\in\real^{n-m}$ and assume that $Z_1^TAZ_2\bv=0$.  Then  $\bw=AZ_2\bv\in\rng(B_1^T)$ since $Z_1^T\bw=0$ implies $\bw$ is in the row space of $B_1$.  Now,  $B_2Z_2\bv=(B_2Z_2)\bv=0$ implies $Z_2\bv\in\ns(B_2)$, so  it is also the case that $\bw\in \rng(A|\ns(B_2))$.  Thus $\bw\in \rng(A|\ns(B_2))\cap \rng(B_1^T)$, and therefore Theorem \ref{thm:Minv}  implies \eqref{eq:cond2}, so $\bw=AZ_2\bv=0$.  This in turn implies $Z_2\bv\in\ns(A)$, and thus condition \eqref{eq:cond1} implies $Z_2\bv=0$.  Hence $Z_2^TZ_2\bv=0$, and since $\rank(Z_2^TZ_2)=\rank(Z_2)=\dim\ns(B_2)=n-m$, the invertiblity of $Z_2^TZ_2\in\real^{n-m}$ guarantees $\bv=0$, completing the proof.
\end{proof}

Once $\bv$ is found in \eqref{eq:ns2}, $\bx$ is obtained via \eqref{eq:xdef}, and $\by$ is solved by premultiplication of \eqref{eq:ns1} by $B_1$ to obtain the $m\times m$ system
\begin{equation}
B_1B_1^T\by = B_1\left(\bff-A(Z_2\bv+\xhat)\right),
\label{eq:ns3}
\end{equation}
which has a unique solution as $B_1B_1^T$ is invertible due to the fact that $B_1$ has full row rank.  The null space method is summarized in Algorithm \ref{algns}.

\begin{algorithm}[Null Space Method for $M$]
\label{algns}
{\rm \begin{enumerate}
\item Find $Z_1$ whose columns form a basis for the null space of $B_1$.
\item Find $Z_2$ whose columns form a basis for the null space of $B_2$.
\item Find $\bx^*$ such that $B_2\bx^*=\bg$. 
\item Solve $Z_1^TAZ_2\bv=Z_1^T(\bff-A\bx^*)$ for $\bv$.
\item Set $\bx = Z_2\bv+\bx^*$.
\item Solve $B_1B_1^T\by = B_1\left(\bff-A(Z_2\bv+\bx^*)\right)$ for $\by$.
\end{enumerate}}
\end{algorithm}

It should be noted that when $\bg=0$ (as is often the case in several applications), the particular solution $\xhat$ found in Step 3 of Algorithm \ref{algns} can simply be the trivial solution.
As remarked in the Introduction, the null space method is traditionally attractive when $n-m$ is small, as the systems in steps 4 and 6 of Algorithm \ref{algns} are both much smaller than  the $(n+m)\times (n+m)$ system \eqref{eq:e1}.  However, when $m$ is small, this method can be very efficient at transforming the $(n+m)\times (n+m)$ system $M\bv=\bb$ (where at least one of $B_1$ and $B_2$ are dense) to a reduced system of size $(n-m)\times (n-m)$ that retains the sparse structure  of $A$, perhaps making it more suitable for direct solvers.

\subsection{A One-Sided Null Space Method For $C\ne 0$}\label{sec:aug}
As noted in \cite{ben051}, the null space method cannot be applied directly to the situation when $M$ has a nonzero $(2,2)$ block, i.e., when $C\ne 0$ in \eqref{eq:e1}.
However, the null space method can be applied when the system \eqref{eq:e1} is augmented with an auxiliary variable $\bw$ to obtain
\begin{equation}
\hat{M}\hat{\bu}=\begin{bmatrix}
A&B_1^T&0\\B_2&C&0\\0&0&I_m
\end{bmatrix}\begin{bmatrix}
\bx\\\by\\\bw
\end{bmatrix}=\begin{bmatrix}
\bff\\\bg\\0
\end{bmatrix}.
\label{eq:mc2}
\end{equation}
It is clear that the invertibility of $M$ implies the invertibility of $\hat{M}$.  As opposed to eliminating both $B_1$ and $B_2$ from the system simultaneously, this can act as a ``one-sided'' null space method to eliminate one of $B_1$ or $B_2$  in one execution of Algorithm \ref{algns}.  Rewriting \eqref{eq:mc2} as
\begin{equation}
\begin{bmatrix}
A&B_1^T&0\\0&0&I_m\\B_2&C&0
\end{bmatrix}\begin{bmatrix}
\bx\\\by\\\bw
\end{bmatrix}=\begin{bmatrix}
\bff\\0\\\bg
\end{bmatrix},
\label{eq:mc3}
\end{equation}
and setting 
\[\xhat = \begin{bmatrix}
\bx\\\by
\end{bmatrix},\quad \hat{\bff}=\begin{bmatrix}
\bff\\0
\end{bmatrix},\quad  \hat{A}=\begin{bmatrix}
A&B_1^T\\0&0
\end{bmatrix},\quad \hat{B}_1^T = \begin{bmatrix}
0\\I_m
\end{bmatrix}, \quad\text{and}\quad \hat{B}_2=\begin{bmatrix}
B_2&C
\end{bmatrix},\]
\eqref{eq:mc3} is represented by the system
\begin{equation}
\begin{bmatrix}
\hat{A}&\hat{B}_1^T\\\hat{B}_2&0
\end{bmatrix}\begin{bmatrix}
\xhat\\\bw
\end{bmatrix}=\begin{bmatrix}
\hat{\bff}\\\bg
\end{bmatrix}. 
\label{eq:mc4}
\end{equation}
Since the coefficient matrix in \eqref{eq:mc4} is invertible, Theorem \ref{thm:inv} applies and therefore the null space method can be utilized.
Note that the null basis $\hat{Z}_1$ for $\hat{B}_1$ is simply 
\[\hat{Z}_1=\begin{bmatrix}
I_n\\0_{m\times n}
\end{bmatrix},\] and premultiplication of $\hat{A}$ by $\hat{Z}_1^T$ simply removes the $m$ zero rows from the bottom of $\hat{A}$.
The null space matrix of $\hat{B}_2$ can be written as $\hat{Z}_2 =\begin{bmatrix} Z_{2}^T &Z_C^T \end{bmatrix}^T$ where 
\begin{equation}
B_2Z_2+CZ_C=0.
\label{eq:schur1}
\end{equation}
Algorithm \ref{algns} then produces a linear system $\hat{Z}_1^T\hat{A}\hat{Z}_2\hat{\bv}=\hat{Z}_1^T(\hat{\bff}-\hat{A}\hat{\bx}^*)$ that reduces to
\begin{equation}
\left(AZ_2+B_1^TZ_{C}\right)\hat{\bv}=\bff-A\bx^*-B_1^T\by^*,
\label{eq:oneside}
\end{equation}
where $\hat{\bx}^* = \begin{bmatrix}\bx^* & \by^*\end{bmatrix}^T$ is the particular solution to $\hat{B}_2\hat{\bx}=\bg$.
with coefficient matrix $\hat{Z}_1^T\hat{A}\hat{Z}_2$ where $B_2$ and $C$  have been eliminated.  Note that when $C$ is invertible, \eqref{eq:schur1} implies $Z_C=-C^{-1}B_2Z_2$ and the coefficient matrix in \eqref{eq:oneside} is just the Schur complement of $C$ in $M$ times $\hat{Z}_2$.  When this is the case, $Z_2$ can be chosen such that $B_2Z_2=0$, thereby implying $Z_C=0$ and reducing \eqref{eq:oneside} to 
\begin{equation}
AZ_2\hat{\bv}=\bff-A\bx^*-B_1^T\by^*,
\label{eq:oneside2}
\end{equation}

While this one-sided approach may not be practical when $m\gg 1$, when $m$ is small this can be easily be employed to eliminate a small number of dense rows or columns.   Also note that there is no need to solve for $\bw$.  A summary of the procedure is given below:
\begin{algorithm}[One-Sided Null Space Method for $M$]
\label{algons}
{\rm 
\begin{enumerate}
\item Find $\hat{Z}_2=\begin{bmatrix} Z_{2} \\Z_C \end{bmatrix}$ whose columns form a basis for the null space of $\begin{bmatrix} B_2&C\end{bmatrix}$.
\item Find $\bx^*$ and $\by^*$ such that $B_2\bx^*+C\by^*=\bg$. 
\item Solve $\hat{A}\hat{Z}_2\hat{\bv}=\left(AZ_2+B_1^TZ_{C}\right)\hat{\bv}=\bff-A\bx^*-B_1^T\by^*$ for $\hat{\bv}$.
\item Set $\bx=Z_2\hat{\bv}+\bx^*$, $\by= Z_C\hat{\bv}+\by^*$.
\end{enumerate}}
\end{algorithm}

\vskip 0.1in

Alternatively, if the objective is to eliminate the column $B_1^T$ in \eqref{eq:mc2}, permute the last two columns of $\hat{M}$ so that $\hat{B}_1=\begin{bmatrix}
B_1&C^T
\end{bmatrix}$.

\section{The Null Space Method for Dense Rows and Columns}\label{sec:alg}
In this section the null space method to eliminate a dense row and/or column from $M$ in \eqref{eq:e1} will be described.  Here it is assumed that $M$ is invertible and $B_1=B_2=B$.  The algorithm for constructing $Z$ developed here can certainly be applied to the case $B_1\ne B_2$ for the null space method described in Algorithm \ref{algns}.

\subsection{Constructing $Z$ for $m=1$}\label{sec:makeZ}
To clearly illustrate the construction of the null space basis, in this section it will be assumed that $m=1$.
The task is to compute a basis for the $(n-1)$-dimensional subspace of $\real^n$ that is the null space of the row vector $B$.  For any $m$, a standard choice for $Z$ (from \cite{ben051}) is to permute the columns of $B$ by multiplication of a suitable permutation matrix $P$ to obtain $BP=\begin{bmatrix}
B_b&B_n
\end{bmatrix}$, where $B_b$ is $m\times m$ and nonsingular, and then construct $Z$ as 
\begin{equation}
Z=P\begin{bmatrix}
-B_b^{-1}B_n\\I_{n-m}
\end{bmatrix}.
\label{eq:badz}
\end{equation}
This ensures  that $BZ=0$.  Such a $Z$ is known as a {\em fundamental basis} \cite{pothenthesis}, and it is clearly desirable for $B_b$ to be easy to invert or factor.  As mentioned previously, this may be a difficult task for a larger $m$,  however, for the case $m=1$ considered here the construction of $Z$ can be computed in $\mathcal{O}(n)$ time.

As there must be at least one nonzero entry in $B=\begin{bmatrix}
b_1 & b_2 & \cdots & b_n
\end{bmatrix}$, without loss of generality assume $b_1\ne 0$.
Since the rank of $B$ is 1, $B_b$ in \eqref{eq:badz} can simply be set to $b_1$ and subsequently $-B_b^{-1}B_n$ is the $1\times (n-1)$ vector given by
\[B_b^{-1}B_n = \begin{bmatrix}
b_2/b_1, & b_3/b_1, &b_4/b_1, &\cdots, & b_n/b_1
\end{bmatrix}.\]  

However, this seemingly natural choice for $Z$ is not practical. 
While this construction is sparse (with exactly $(|B|-1)+(n-1)=n+|B|-2$ nonzero entries) it has a severe disadvantage when employing a null space method, as a dense $B$ implies the first row of $Z$ is also dense, which can have a drastic effect on the sparsity of products of the form $Z^T AZ$.  In fact, if $|B|=n$, the product $Z^TI_nZ=Z^TZ$ will be completely full, as is illustrated in Figure \ref{fig:badz}.  Since the outdegree of vertex $1'$ in $Z$ is $n$, the layered graph $L(Z^TZ)$ contains a path from every row index of $Z^T$ to every column index of $Z$.  Thus, the only situation in which $Z^TAZ$ will not be a full matrix is when there are no nonzero entries in the first column of $A$.
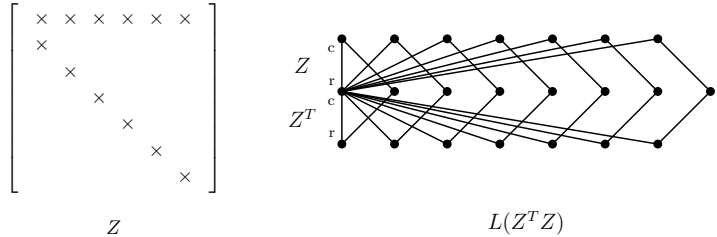
\begin{figure}[H]
\centering
\scalebox{0.75}{
\begin{tikzpicture}[scale=1.0]
\matrix (m)[ampersand replacement=\&,matrix of math nodes,left delimiter={[},right delimiter={]}]
{
\times \& \times \& \times \&\times \& \times \&\times  \\
\times \&  \& \&  \&   \&   \\
  \& \times \&   \&  \&  \&  \\
  \&   \& \times \& \&  \&   \\
  \&   \&   \&\times \&   \&  \\
  \&   \&   \&  \& \times \&  \\
  \&   \&   \&  \&  \&\times  \\
  };
   \node at(0,-2.3) {$Z$};
\end{tikzpicture}
}
\hskip 0.2in
\scalebox{0.7}{
\begin{tikzpicture}[style=thick]
\foreach \x in {0,1,2,3,4,5,6}
{\draw (0,1)--(\x,2);}
\foreach \x in {0,1,2,3,4,5,6}
{\draw (0,1)--(\x,0);}
\foreach \y in {1,2,3,4,5,6,7}
{\draw (\y,1) -- (\y-1,2);}
\foreach \y in {1,2,3,4,5,6,7}
{\draw (\y,1) -- (\y-1,0);}
\foreach \x in {1,2,3,4,5,6,7}{
\filldraw (\x-1,0) circle (2pt);
\filldraw (\x-1,2) circle (2pt);}
\foreach \x in {0,1,2,3,4,5,6,7}{
\filldraw (\x,1) circle (2pt);}
\node at(-0.75,0.5){\large $Z^T$};
\node at(-0.75,1.5){\large $Z$};
\node at(3.5,-1.5) {\large $L(Z^TZ)$};
\node[above left] at (0,0) {\footnotesize r};
\node[above left] at (0,1) {\footnotesize r};
\node[below left] at (0,1) {\footnotesize c};
\node[below left] at (0,2) {\footnotesize c};
\end{tikzpicture}}
\caption{Illustration of $Z$ and $Z^TZ$ in \eqref{eq:badz} when $|B|=n$.}
\label{fig:badz}
\end{figure}

Thus, an alternate method for constructing the null space basis $Z$ that allows $Z^TAZ$ to retain as much of the original sparse structure of $A$ as possible is desired.  To achieve this, $Z$ should be constructed so that the outdegree of all rows of $Z$ is as small as possible.  Following the general principle that each column $\bz_j$ of $Z$ must be constructed so that $B\bz_j=0$, an alternate approach can be derived from using only a single nonzero entry in $B$ to eliminate a subsequent nonzero entry.  Again, without loss of generality, assume that $b_1\ne 0$, and also let $b_m$ represent the last nonzero entry in $B$ ($m\le n$).  The first step is to find the next nonzero entry in $B$, as it will be used with $b_1$ to produce a null vector.  Starting with $j=1+1$, if $b_j=0$, then column $\bz_j$ of $Z$ will simply be set to $\be_j\in\real^n$ (the elementary basis vector for index $j$), as this will imply $B\bz_j=0$.  If a nonzero $b_j$ is found, then the first column $\bz_1$ of $Z$ is all zeros with the exception of a $1$ in entry 1 and $-b_1/b_j$ in entry $j$.  This ensures that $B\bz_1=0$.  
 The algorithm resumes by setting $i$ equal to $j$ and then seeking the next nonzero entry in $B$.  The above procedure is repeated until the last nonzero entry $b_m$ is encountered.  At this point, $m-1$ linearly independent columns $\bz_k$,  $k=1,\ldots,m-1$, have been constructed, and $B\bz_k=0$ for each $k$.  It remains to add $n-m$ more linearly independent columns to $Z$.  Since the $n-m$ entries $b_{m+1}=b_{m+2}=\cdots=b_n=0$, simply add the elementary basis vector $\be_k$, $m+1\le k\le n$ to complete the construction of $Z$.
The procedure, which requires $\mathcal{O}(n)$ operations, is summarized in Algorithm \ref{alg1}.  In the algorithm, $\varepsilon$ represents the smallest number that is to be treated as a numerical nonzero.  Depending on the programming environment and sparse data structures employed, the algorithm can be executed very quickly, for example using the {\tt find()} function and vectorized computations in MATLAB.

\begin{algorithm}[Construct $Z$]  
{\rm
\begin{algorithmic}[1]
\State $Z\gets 0_{n\times (n-1)}$
\State $i\gets 1$
\While{$i < m$}
\State $Z_{i,i}\gets 1$
\State $j\gets i+1$
\While{$|b_j|< \varepsilon$}
\State $Z_{j,j}\gets 1$
\State $j\gets j+1$
\EndWhile
\State $Z_{j,i} \gets -b_i/b_j$
\State $i\gets j$
\EndWhile
\For{$i=m$ to $n$}
\State $Z_{i+1,i} \gets 1$
\EndFor
\end{algorithmic}}
\label{alg1}
\end{algorithm}

The result of the preceding discussion is summarized below.
\begin{lemma}
For any $B\in \real^{1\times n}$ with at least one nonzero entry, the columns of $Z$ produced by Algorithm \ref{alg1} form a basis for the $(n-1)$-dimensional null space of $B$.  The number of nonzero entries in $Z$ is $|B|+n-2$.
\end{lemma}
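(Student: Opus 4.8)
The plan is to check the three assertions one at a time: (a) every column of $Z$ lies in $\ns(B)$; (b) the $n-1$ columns of $Z$ are linearly independent; and (c) $|Z| = |B| + n - 2$. Since $B$ has a nonzero entry it has rank $1$, so $\dim\ns(B) = n-1$, and then (a) together with (b) shows the columns are a basis.

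First I would catalogue what Algorithm \ref{alg1} produces. With the stated normalization $b_1 \ne 0$, write the nonzero positions of $B$ as $1 = p_1 < p_2 < \cdots < p_r$, where $r = |B|$. Tracing the algorithm shows that $Z$ has columns of exactly two kinds: for each $t = 1,\ldots,r-1$ a \emph{coupling column} $\bz = \be_{p_t} - (b_{p_t}/b_{p_{t+1}})\,\be_{p_{t+1}}$, stored in column index $p_t$; and, in each of the remaining $(n-1)-(r-1) = n-r$ column indices, a single elementary basis vector $\be_k$ with $b_k = 0$ (the inner \texttt{while} loop supplies those for the zero positions of $B$ lying below $p_r$, and the \texttt{for} loop supplies $\be_{p_r+1},\ldots,\be_n$). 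Assertion (a) is then one line each: $B\bigl(\be_{p_t} - (b_{p_t}/b_{p_{t+1}})\be_{p_{t+1}}\bigr) = b_{p_t} - (b_{p_t}/b_{p_{t+1}})\,b_{p_{t+1}} = 0$, and $B\be_k = b_k = 0$. Assertion (c) is the addition $2(r-1) + (n-r) = |B| + n - 2$: the two supports $p_t \ne p_{t+1}$ of a coupling column are distinct, so there is no cancellation, and the elementary columns have one nonzero apiece.

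The content is assertion (b), and the plan is to produce a nonsingular square submatrix of $Z$. Let $S = \{1,\ldots,n\}\setminus\{p_r\}$, a set of size $n-1$, and let $Z' = Z[S,:]$, with the elements of $S$ listed increasingly and the columns in natural order. The claim is that $Z'$ is lower triangular with a nowhere-vanishing diagonal. To see this, note that every column $j$ of $Z$ has all of its nonzeros in rows whose position inside $S$ is at least $j$: for a coupling column at index $j = p_t$ the two supports are $p_t = j$ and $p_{t+1}$, with $p_{t+1} > j$, sitting at an $S$-position greater than $j$ (i.e.\ strictly below the diagonal) when $p_{t+1} < p_r$, or being deleted entirely when $p_{t+1} = p_r$; for an elementary column $\be_k$ at index $j$ the unique support is $k = j$ (when $j < p_r$) or $k = j+1$ (when $j \ge p_r$), and in both cases $k$ occupies $S$-position $j$. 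Hence the diagonal entry of $Z'$ in column $j$ is that column's leading $1$ ($Z_{j,j}=1$ or $Z_{j+1,j}=1$), so $\det Z' \ne 0$; thus $\rank(Z) = n-1$, and with (a) the columns of $Z$ form a basis of $\ns(B)$.

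I expect the only fussy step to be the index bookkeeping in the triangularity claim — sorting out which column indices carry coupling columns and which carry elementary ones, and checking that each off-diagonal nonzero of $Z'$ really does land strictly below the diagonal — while (a) and (c) amount to a substitution and a count. (It is also worth noting that the \texttt{for} loop of Algorithm \ref{alg1} should be read as sweeping the column indices $p_r, p_r+1, \ldots, n-1$ that the \texttt{while} loop leaves untouched, which is precisely what brings the total number of columns to $n-1$.)
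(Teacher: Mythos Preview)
Your proof is correct. The paper does not give a formal proof of this lemma at all; it is stated as a summary of the informal discussion preceding it, where linear independence is simply asserted (``At this point, $m-1$ linearly independent columns $\bz_k$\ldots have been constructed'') and the nonzero count is inherited from the earlier observation that the fundamental basis \eqref{eq:badz} has $(|B|-1)+(n-1)$ nonzeros and the new $Z$ has the same count.

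Your argument therefore supplies rigor the paper omits. Parts (a) and (c) match the paper's informal reasoning exactly. For (b), your device of deleting row $p_r$ and exhibiting the remaining $(n-1)\times(n-1)$ block as unit lower triangular is the natural way to certify independence, and it is not made explicit in the paper. Your parenthetical about the \texttt{for} loop range is also a fair catch: as written the loop runs one step too far, and the intended range is indeed $p_r,\ldots,n-1$.
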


While the $Z$ constructed by Algorithm \ref{alg1} has exactly the same number of nonzero entries as the $Z$ given by \eqref{eq:badz}, the important characteristic of this $Z$ is that each row and each column have at most 2 nonzero entries.
An example of a $B$ and its corresponding $Z$ is given in \eqref{eq:bex}, and the general form of $Z$ and $L(Z^TZ)$ when $|B|=n$ are given in Figure \ref{fig:goodz2}.
\begin{equation}
\begin{footnotesize}
B^T=\begin{bmatrix}
0 \\1\\ -3\\ 0\\ -1\\ 2\\ 0\\ 0
\end{bmatrix}\qquad\Longrightarrow\qquad Z=\begin{bmatrix}
1 & & & & & &   \\
& 1 & & & & &   \\
&\frac{1}{3}  & 1 &  & &  \\
& &  &1 & & &  \\
& & -3 & &1 & &   \\
& &  & & \frac{1}{2}& &   \\
& &  & & & 1&   \\
& &  & & & &   1
\end{bmatrix}\end{footnotesize}
\label{eq:bex}.
\end{equation}
\begin{figure}[H]
\centering
\scalebox{0.75}{
\begin{tikzpicture}[scale=1.0]
\matrix (m)[ampersand replacement=\&,matrix of math nodes,left delimiter={[},right delimiter={]}]
{
\times \& \& \& \&  \&  \\
\times \& \times \& \&  \&   \&   \\
  \& \times \&  \times \&  \&  \&  \\
  \&   \& \times \& \times \&  \&   \\
  \&   \&   \&\times \&  \times \&  \\
  \&   \&   \&  \& \times \&  \times\\
  \&   \&   \&  \&  \&\times  \\
  };
   \node at(0,-2.3) {$Z$};
\end{tikzpicture}
}
\hskip 0.2in
\scalebox{0.7}{
\begin{tikzpicture}[style=thick]
\foreach \x in {0,1,2,3,4,5,6}
{\draw (\x,1)--(\x,2);\draw (\x,0)--(\x,1);}
\foreach \x in {0,1,2,3,4,5,6}
{\draw (\x+1,1)--(\x,2);\draw (\x+1,1)--(\x,0);}
\foreach \x in {1,2,3,4,5,6,7}{
\filldraw (\x-1,0) circle (2pt);
\filldraw (\x-1,2) circle (2pt);}
\foreach \x in {0,1,2,3,4,5,6,7}{
\filldraw (\x,1) circle (2pt);}
\node at(-0.75,0.5){\large $Z^T$};
\node at(-0.75,1.5){\large $Z$};
\node at(3.5,-1.5) {\large $L(Z^TZ)$};
\node[above left] at (0,0) {\footnotesize r};
\node[above left] at (0,1) {\footnotesize r};
\node[below left] at (0,1) {\footnotesize c};
\node[below left] at (0,2) {\footnotesize c};
\end{tikzpicture}}
\caption{Illustration of $Z$ and $Z^TZ$ given by Algorithm \ref{alg1} when $|B|=n$.}
\label{fig:goodz2}
\end{figure}
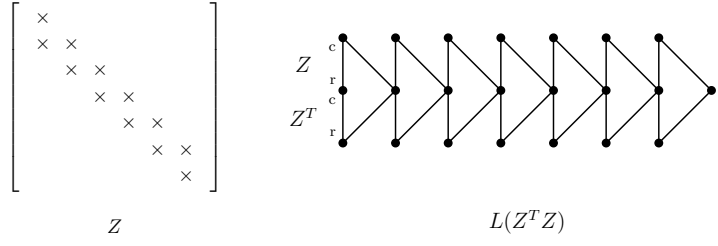

\begin{remark}
{\rm The maximum degree of any row or column vertex in $H(Z)$, where $Z$ is generated by Algorithm \ref{alg1}, is 2.}
\end{remark}

Figure \ref{fig:goodz} gives an example of how the nonzero pattern of $Z^TAZ$ is determined when $B$ is full.  In this worst-case situation, while $Z^TAZ$ may have as many as 4 times as many nonzero entries as $A$, the product retains the overall sparse structure of $A$ and the dense row and column are eliminated from the system.
\begin{figure}[h]
\centering
\scalebox{0.75}{
\begin{tikzpicture}[style=thick]
\draw (0,1)--(2,2);
\draw (0,1)--(5,2);
\draw (1,1)--(3,2);
\draw (1,1)--(4,2);
\draw (2,1)--(0,2);
\draw (2,1)--(6,2);
\draw (3,1)--(1,2);
\draw (4,1)--(2,2);
\draw (4,1)--(7,2);
\draw (5,1)--(4,2);
\draw (6,1)--(9,2);
\draw (7,1)--(6,2);
\draw (8,1)--(5,2);
\draw (9,1)--(10,2);
\draw (9,1)--(7,2);
\draw (10,1)--(9,2);
\foreach \x in {0,1,2,3,4,5,6,7,8,9}
{\draw (\x,3)--(\x,2);\draw (\x,0)--(\x,1);}
\foreach \x in {0,1,2,3,4,5,6,7,8,9}
{\draw (\x,3)--(\x+1,2);\draw (\x,0)--(\x+1,1);}
\foreach \x in {0,2,4,5,6,8,9}
{\draw (\x,2)--(\x,1);}
\foreach \x in {0,1,2,3,4,5,6,7,8,9}{
\filldraw (\x,0) circle (2pt);
\filldraw (\x,3) circle (2pt) ;}
\foreach \x in {0,1,2,3,4,5,6,7,8,9,10}{
\filldraw (\x,1) circle (2pt);
\filldraw (\x,2) circle (2pt);}
\node[above left] at (0,0) {\footnotesize r};
\node[above left] at (0,1) {\footnotesize r};
\node[above left] at (0,2) {\footnotesize r};
\node[below left] at (0,1) {\footnotesize c};
\node[below left] at (0,2) {\footnotesize c};
\node[below left] at (0,3) {\footnotesize c};
\node at(-0.75,0.5){\large $Z^T$};
\node at(-0.75,1.5){\large $A$};
\node at(-0.75,2.5){\large $Z$};
 \node at(5,-0.75) {\large $L(Z^TAZ)$};
 \filldraw[white,fill=white] (6.2,-0.25) rectangle (6.8,3.25);
 \draw (6.2,3.5)--(6.2,-0.5);
\draw (6.8,3.5)--(6.8,-0.5);
\node at (6.5,1.5){$\cdots$};
\end{tikzpicture}}
\caption{The layered graph $L(Z^TAZ)$ for $Z$ constructed by Algorithm \ref{alg1} when $|B|=n$ and a random $A$.}
\label{fig:goodz}
\end{figure}
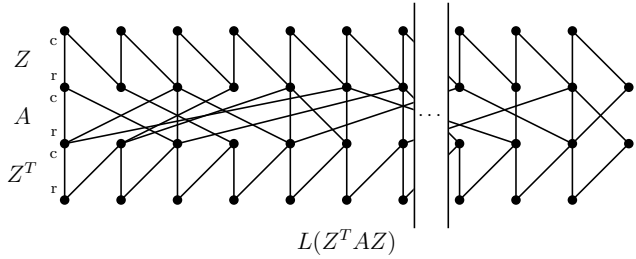

\begin{theorem}\label{thm:bound}
Assume $m=1$, $C=0$, and $B_1=B_2=B$ in \eqref{eq:e1}.  Let $Z\in\real^{n\times(n-1)}$ be given as in Algorithm \ref{alg1}.  Then  $Z^TAZ$ in Step 4 of Algorithm \ref{algns} satisfies $|Z^TAZ| \le 4|A|$.
\end{theorem}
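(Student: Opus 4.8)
The plan is to read the fill of the triple product $Z^{T}AZ$ off the layered graph $L(Z^{T}AZ)$ and to control it using the single structural property that Algorithm~\ref{alg1} guarantees for $Z$: every row and every column of $Z$ has at most two nonzero entries, i.e.\ by the Remark every vertex of $H(Z)$ has degree at most $2$. By Lemma~\ref{lem:cohen}, in its generalization to a product of three matrices, $|Z^{T}AZ|$ equals the number of ordered pairs $(p',q)$ with $p'$ a bottom-layer vertex and $q$ a top-layer vertex of $L(Z^{T}AZ)$ that are joined by a directed path. Here $L(Z^{T}AZ)$ has four layers of vertices, with edges running only between consecutive layers (directed from lower to higher): the row vertices of $Z^{T}$ (indexed by the columns of $Z$) at the bottom, then the row vertices of $A$, then the column vertices of $A$, and finally the column vertices of $Z$ at the top.

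First I would note that, because edges connect only consecutive layers and point upward, any directed path from the bottom layer to the top layer visits exactly one vertex $k$ in the $A$-row layer and exactly one vertex $\ell$ in the $A$-column layer, and such a path from $p'$ to $q$ exists precisely when $Z_{kp}\neq 0$, $A_{k\ell}\neq 0$, and $Z_{\ell q}\neq 0$. I would then count by the ``middle'' edge: the edges $\langle k,\ell\rangle$ with $A_{k\ell}\neq 0$ number exactly $|A|$, and for a fixed such edge the set of admissible bottom endpoints $p$ is $\{p:Z_{kp}\neq 0\}$, the support of row $k$ of $Z$, which has size at most $2$, while the set of admissible top endpoints $q$ is $\{q:Z_{\ell q}\neq 0\}$, the support of row $\ell$ of $Z$, again of size at most $2$. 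Hence each middle edge lies on at most $2\cdot 2 = 4$ complete bottom-to-top paths, so it accounts for at most $4$ of the pairs counted by Lemma~\ref{lem:cohen}.

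Finally I would take a union bound over the $|A|$ middle edges: the set of index pairs $(p,q)$ for which $(Z^{T}AZ)_{pq}$ is structurally nonzero is the union over the nonzeros $A_{k\ell}$ of the pair sets $\{(p,q):Z_{kp}\neq 0,\ Z_{\ell q}\neq 0\}$, each of cardinality at most $4$, whence $|Z^{T}AZ|\le 4|A|$. (One can bypass the graph language entirely and argue directly from $(Z^{T}AZ)_{pq}=\sum_{k,\ell}Z_{kp}A_{k\ell}Z_{\ell q}$, using that numerical cancellation is ignored throughout.) I do not expect a genuine obstacle; the only point requiring care is the bookkeeping — it is the row supports of $Z$, controlled by Algorithm~\ref{alg1}, and not the column supports of $Z^{T}$, that govern how a single nonzero of $A$ propagates — and the result is only an inequality because distinct nonzeros of $A$ may contribute to the same entry of $Z^{T}AZ$, with the extreme $|Z^{T}AZ|=4|A|$ essentially realized when $|B|=n$ (cf.\ Figure~\ref{fig:goodz}).
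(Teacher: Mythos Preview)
Your argument is correct and is essentially the paper's: both count bottom-to-top paths in $L(Z^{T}AZ)$ using the degree-$2$ bound on the rows of $Z$ guaranteed by Algorithm~\ref{alg1}, the only difference being organizational --- you aggregate directly over the $|A|$ middle edges, whereas the paper bounds $|(Z^{T}AZ)_{i*}|$ row by row and then re-aggregates using the indegree bound on column vertices of $Z^{T}$. (One small slip in your final paragraph: the contrast you draw between ``row supports of $Z$'' and ``column supports of $Z^{T}$'' is vacuous, since these are the same thing; presumably you meant column supports of $Z$.)
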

\begin{proof}
The result is shown by bounding the number of nonzero entries in row $i$ of $Z^TAZ$ and then summing over the rows.
Let $i\in 1,\ldots,n-1$ be a row vertex  of $Z^T$.  The construction of $Z$ guarantees that the outdegree of vertex $i$ at most 2.  Assume the outdegree is 2 and let $j_{i,1}, j_{i,2}$ be the column vertices of $Z^T$ reached by vertex $i$ in $H(Z^T)$.  These column vertices of $Z^T$ correspond to the row vertices $j_{i,1}$, $j_{i,2}$ of $A$, which have outdegree $|A_{j_{i,1}*}|$, $|A_{j_{i,2}*}|$ respectively.  Each column vertex of row $j_{i,1}$, $j_{i,2}$ of $A$ corresponds to a row vertex of $Z$, all of which have an outdegree of at most 2.  Thus, the number of possible paths originating from row vertex $i$ in the layered graph of $Z^TAZ$ is bounded by
\[|(Z^TAZ)_{i*}|\le 2|A_{j_{i,1}*}|+2|A_{j_{i,2}*}|.\]
Using part 1 of Lemma \ref{lem:cohen} and summing over the rows of $Z^T$ gives the bound
\begin{equation}
|Z^TAZ| \le \sum_{i=1}^{n-1} \left(2|A_{j_{i,1}*}|+2|A_{j_{i,2}*}|\right).
\label{eq:ztazb1}
\end{equation}
However, the indegree of any column vertex of $Z^T$ is at most 2 as well, which guarantees that the indices $j_{i,1}$ and $j_{i,2}$ appear at most twice in the right hand side of \eqref{eq:ztazb1}.  Thus we have
\begin{equation}
|Z^TAZ| \le \sum_{i=1}^{n-1} \left(2|A_{j_{i,1}*}|+2|A_{j_{i,2}*}|\right)\le \sum_{j=1}^n 2\cdot 2|A_{j*}| = 4|A|,
\label{eq:ztazb2}
\end{equation}
which completes the proof.
\end{proof}

\begin{remark}{\rm
The maximum inflation, i.e. $|Z^TAZ|/|M|$ when $|B|=n$ is  $4-8n/|M|$.  For example, if $A=I_n$, $|M|=3n$ which results in a maximum inflation of $4-8/3=4/3$.}
\end{remark}

Recalling the one-sided null space method in Section \ref{sec:aug}, a similar argument to Theorem \ref{thm:bound} leads to the following result.  Note that the last row of $\hat{Z}_2$ produced by Algorithm \ref{alg1} contains only one nonzero entry.
\begin{theorem}  
Let $\hat{B}_2=\begin{bmatrix} B_2&C\end{bmatrix}$ in \eqref{eq:e1}.  Let $\hat{Z}_2\in\real^{n+1\times n}$ be given as in Algorithm \ref{alg1}.  Then  $\hat{A}\hat{Z}_2$ in Step 3 of Algorithm \ref{algons} satisfies $|\hat{A}\hat{Z}_2| \le 2|A|+|B_1|$.
\end{theorem}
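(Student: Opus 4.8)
The plan is to mimic the structure of the proof of Theorem \ref{thm:bound}, tracking how paths propagate through the layered graph $L(\hat A\hat Z_2)$, but now exploiting the asymmetry introduced by $\hat A = \left[\begin{smallmatrix} A & B_1^T\\ 0 & 0\end{smallmatrix}\right]$. First I would note the structural facts I need: $\hat Z_2\in\real^{(n+1)\times n}$ is built by Algorithm \ref{alg1} applied to the length-$(n+1)$ row vector $\hat B_2 = \begin{bmatrix} B_2 & C\end{bmatrix}$ (recall $m=1$, so $C$ is a scalar), so every row and every column of $\hat Z_2$ has at most $2$ nonzeros, and moreover — as the sentence preceding the statement observes — the last row of $\hat Z_2$ (the row that multiplies against the $B_1^T$-block of $\hat A$) has exactly one nonzero. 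Also $|\hat Z_2|= |\hat B_2| + (n+1) - 2 = |B_2| + |C| + n - 1$ by the Lemma, though what I actually need is the degree bounds, not this count.

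The core computation is to bound $|(\hat A\hat Z_2)_{i*}|$ for each row index $i$ and sum, using part 1 of Lemma \ref{lem:cohen} on the three-layer graph whose bottom layer is the row vertices of $\hat A$, middle layer is the shared $(n+1)$ vertices (columns of $\hat A$ / rows of $\hat Z_2$), and top layer is the $n$ column vertices of $\hat Z_2$. Rows $n+1$ of $\hat A$ (the zero row) contribute nothing, so I only sum over $i=1,\dots,n$. Row $i$ of $\hat A$ consists of the $i$th row of $A$ (reaching middle-layer vertices $1,\dots,n$) together with one extra entry in column $n+1$ of $\hat A$ coming from $B_1^T$, i.e. an edge from row-vertex $i$ to middle-layer vertex $n+1$, present exactly when $(B_1)_i\ne 0$. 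A path out of $i$ goes: to some middle vertex $k$ (there are $|A_{i*}|$ such with $k\le n$, plus possibly the one vertex $n+1$), then up through $\hat Z_2$, whose every row has outdegree $\le 2$ — except row $n+1$ which has outdegree $1$. Hence
\begin{equation}
|(\hat A\hat Z_2)_{i*}| \;\le\; 2|A_{i*}| + \bigl[(B_1)_i\ne 0\bigr],
\label{eq:rowbd2}
\end{equation}
where the bracket is $1$ if $(B_1)_i\ne 0$ and $0$ otherwise, using that middle vertex $n+1$ has only one outgoing $\hat Z_2$-edge. Summing \eqref{eq:rowbd2} over $i=1,\dots,n$ gives $|\hat A\hat Z_2|\le 2|A| + |B_1|$, which is the claim. (There is a subtlety: unlike in Theorem \ref{thm:bound} one does not even need the column/indegree argument, because here the left factor $\hat Z_1^T$ has been absorbed — the product in question is just $\hat A\hat Z_2$, two matrices, not a triple product, so the single application of Lemma \ref{lem:cohen} part 1 suffices and no double-counting correction is required.)

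The main thing to get right — and the only real obstacle — is the bookkeeping on middle vertex $n+1$: I must verify that Algorithm \ref{alg1}, applied to $\hat B_2$, really does produce a last row of $\hat Z_2$ with a single nonzero, so that the $B_1^T$-edge in $\hat A$ fans out to at most one column of $\hat Z_2$ rather than two. This is exactly the remark inserted before the theorem statement, and it follows because the final \texttt{for} loop of Algorithm \ref{alg1} sets a single entry $Z_{i+1,i}\gets 1$ for the last index, and, in the $m=1$ arrowhead setting where $\hat B_2$'s last entry $C$ may or may not be zero, one checks both branches (if $C\ne 0$ it is the terminating nonzero $b_m$ handled by the while-loop, contributing the $-b_i/C$ entry in row $n+1$ which is again the unique nonzero of that row; if $C=0$, row $n+1$ is an elementary-vector column). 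Either way row $n+1$ of $\hat Z_2$ has exactly one nonzero, so \eqref{eq:rowbd2} holds; everything else is the same degree-counting already carried out in Theorem \ref{thm:bound}.
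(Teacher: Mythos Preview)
Your proposal is correct and follows exactly the approach the paper intends: the paper does not give a full proof but only states that ``a similar argument to Theorem~\ref{thm:bound} leads to the following result'' and supplies the key observation that the last row of $\hat Z_2$ has a single nonzero. You have faithfully carried out that similar argument, correctly using the row-degree bound of $2$ for rows $1,\dots,n$ of $\hat Z_2$ and the special degree-$1$ bound for row $n+1$, and correctly noting that no indegree/double-counting correction is needed since this is a two-factor product rather than a triple product.
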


\subsection{Conditioning vs. Sparsity}

As remarked in \cite{ben051}, there is a trade-off between good conditioning properties of $Z$ and sparsity.     While
Algorithm \ref{alg1} can easily be modified so that each column is normalized as it constructed, any orthogonalization process such as Gram-Schmidt on the columns of $Z$ will induce significant fill above (or below) the diagonal.  While it is possible to construct an orthogonal basis for $\ns(B)$ from the last $n-1$ columns of $Q$ in the QR factorization of $B$, in general this is impractical when $B$ is merely a single row and $n$ is large.

However, as the intent of this null space method is to prepare a matrix for a direct solver, conditioning may not necessarily be as significant of a concern as it is when an iterative solver is employed.  It is still important to understand the general conditioning properties of $Z$.  For the moment, assume $B$ is full, and let $\alpha_i = -b_i/b_{i+1}$ (i.e., the subdiagonal entries of $Z$).  A simple upper bound estimate for the largest singular value $\sigma_1(Z)$ of $Z$ can be found using Corollary 2.4.4 of \cite{golubvanloan}.  Writing $Z=I_{n\times (n-1)}+Z_\alpha$, we have
\[\sigma_1(Z)=\sigma_1(I_{n\times (n-1)}+Z_\alpha) \le \|I_{n\times (n-1)}\|_2+\sigma_1(Z_\alpha)=1+\max_{1\le i\le n-1}\{|\alpha_i|\},\]
where $\|\cdot\|_2$ is the matrix 2-norm.  This indicates that the relative sizes of successive nonzero entries  in $B$ play a major role in the conditioning of $Z$, and suggests that permuting the rows/columns of $M$ to achieve a smaller maximum $|\alpha|$ would be advantageous.  However, such an approach could destroy any desirable structure properties that $A$ enjoys, such as a small bandwidth.

Lower bounds for the smallest singular value $\sigma_{n-1}(Z)$ are not as easy.  While several bounds exist in the current literature, the seemingly sharpest bounds to date \cite{zou2010,zou2012} require quantities raised to powers on the order of $n$, which is not feasible for large sparse problems.  Of course, conditioning is a well-studied problem, therefore an efficient and reliable condition estimator such as MATLAB's {\tt condest()} applied to $Z^TZ$ should be utilized to gauge the potential impact of the null space method on  $Z^TAZ$.
When the condition number of $Z^TAZ$ is very large, gains in solver performance may still be realized by employing the one-sided null space method in Algorithm \ref{algons}.

\subsection{Extension to $m>1$}

The following slight generalization of Theorem 6.4.1 in \cite{golubvanloan} allows for an easy extension of Algorithm \ref{alg1} to the case $m>1$.

\begin{lemma} Let $B_1\in \real^{m_1\times n}$ and $B_2\in\real^{m_2\times n}$ have full row rank.  Assume the columns of $Z_1\in\real^{n\times (n-m_1)}$ span $\ns(B_1)$, and assume the 
columns of $Z_2\in\real^{(n-m_1)\times (n-m_1-m_2)}$ span $\ns(B_2Z_1)$.  Then the columns of $Z_1Z_2$ form a basis for the null space of $\begin{bmatrix}
B_1\\B_2
\end{bmatrix}$.
\end{lemma}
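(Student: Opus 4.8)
The plan is to show the two standard facts that characterize a null basis: that the columns of $Z_1Z_2$ lie in the null space of $\left[\begin{smallmatrix} B_1 \\ B_2 \end{smallmatrix}\right]$, that they are linearly independent, and that there are enough of them ($n-m_1-m_2$ columns) to span a space of the correct dimension. First I would verify membership: for any $\bv$, $B_1(Z_1Z_2\bv) = (B_1Z_1)Z_2\bv = 0$ since $B_1Z_1 = 0$, and $B_2(Z_1Z_2\bv) = (B_2Z_1)Z_2\bv = 0$ since the columns of $Z_2$ span $\ns(B_2Z_1)$. Hence $\rng(Z_1Z_2) \subseteq \ns\!\left(\left[\begin{smallmatrix} B_1 \\ B_2 \end{smallmatrix}\right]\right)$.

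Next I would count dimensions. Since $B_1$ has full row rank $m_1$, $\dim\ns(B_1) = n - m_1$, so $Z_1$ has $n-m_1$ columns and $\rank(Z_1) = n-m_1$. The matrix $B_2 Z_1$ is $m_2 \times (n-m_1)$; I claim it has full row rank $m_2$. If not, there is a nonzero $\bw \in \real^{m_2}$ with $\bw^T B_2 Z_1 = 0$, i.e. $(\bw^T B_2)$ vanishes on $\rng(Z_1) = \ns(B_1)$, so $\bw^T B_2 \in \ns(B_1)^\perp = \rng(B_1^T)$, meaning $\bw^T B_2 = \bz^T B_1$ for some $\bz$; this says the rows of $\left[\begin{smallmatrix} B_1 \\ B_2 \end{smallmatrix}\right]$ are linearly dependent, which must be excluded — so I would add the hypothesis (or note it is implicitly needed) that $\left[\begin{smallmatrix} B_1 \\ B_2 \end{smallmatrix}\right]$ has full row rank $m_1+m_2$, equivalently $\ns(B_1^T)\cap\rng(B_2^T)$-type transversality. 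Granting that, $B_2Z_1$ has full row rank $m_2$, so $\dim\ns(B_2Z_1) = (n-m_1) - m_2$, and thus $Z_2$ has $n-m_1-m_2$ columns with $\rank(Z_2) = n-m_1-m_2$.

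Then I would establish linear independence of the columns of $Z_1Z_2$: if $Z_1Z_2\bv = 0$, then $Z_2\bv \in \ns(Z_1) = \{0\}$ because $Z_1$ has full column rank, so $Z_2\bv = 0$, and then $\bv = 0$ because $Z_2$ has full column rank. Hence $\rank(Z_1Z_2) = n-m_1-m_2$. Finally, since $\left[\begin{smallmatrix} B_1 \\ B_2 \end{smallmatrix}\right]$ has full row rank $m_1+m_2$ (again using the transversality hypothesis), its null space has dimension exactly $n-m_1-m_2$; combined with the inclusion $\rng(Z_1Z_2) \subseteq \ns\!\left(\left[\begin{smallmatrix} B_1 \\ B_2 \end{smallmatrix}\right]\right)$ and the matching dimension count, the columns of $Z_1Z_2$ form a basis, completing the proof.

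I expect the main obstacle to be the rank argument for $B_2Z_1$: the statement as written says only that $B_1,B_2$ each have full row rank, but $Z_2$ having $n-m_1-m_2$ columns — and the composite having full row rank $m_1+m_2$ — really requires that the row spaces of $B_1$ and $B_2$ intersect trivially (as is automatic in the setting of Theorem~\ref{thm:Minv}, where invertibility of the saddle point matrix forces the relevant transversality). I would either invoke that ambient hypothesis or argue that $\dim\ns(B_2Z_1) = n - m_1 - m_2$ follows from the construction being well-posed; everything else is routine linear algebra on products of full-rank matrices.
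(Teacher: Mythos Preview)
The paper does not supply a proof of this lemma; it simply records it as a slight generalization of Theorem~6.4.1 in Golub and Van Loan and moves directly to the nested construction in Algorithm~\ref{alg2}. Your argument is the standard one and is correct.

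On the obstacle you flag: the stated dimensions already encode the transversality you want. Because $Z_2$ is declared to lie in $\real^{(n-m_1)\times(n-m_1-m_2)}$ and its columns span $\ns(B_2Z_1)$, one has $\dim\ns(B_2Z_1)\le n-m_1-m_2$; rank--nullity on the $m_2\times(n-m_1)$ matrix $B_2Z_1$ then forces $\rank(B_2Z_1)=m_2$ exactly, and by your own contrapositive argument this is equivalent to $\left[\begin{smallmatrix}B_1\\B_2\end{smallmatrix}\right]$ having full row rank $m_1+m_2$. So no additional hypothesis is required---the size of $Z_2$ in the statement already carries it---and your dimension count closes the proof as written.
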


The nested construction of $Z^{n\times (n-m)}$ is given in the algorithm below where $B_i$ is the $i$th row of the $m\times n$ block $B$.

\begin{algorithm}[Construct $Z$ for $m>1$]  {\rm
\begin{algorithmic}[1]
\State Construct $Z_1$ for $B_1$ using Algorithm \ref{alg1}.
\For{$i =2$ to $m$}
\State Construct $Z\in \real^{(n-i+1)\times (n-i)}$ for $B_iZ_{i-1}\in \real^{1\times (n-i+1)}$ using Algorithm \ref{alg1}.
\State $Z_i\gets Z_{i-1}Z$
\EndFor
\end{algorithmic} }
\label{alg2}
\end{algorithm}

Algorithm \ref{alg2} constructs $Z$ in $m$ applications of Algorithm \ref{alg1}, $m-1$ matrix-vector products, and $m-1$ matrix multiplications, each of which are the product of two lower triangular matrices.

\section{Applications of the Method}\label{sec:app}
Here several applications of the null space method for systems with dense rows and columns are described.  In these applications, the null space basis matrix $Z$ is constructed as given in Section \ref{sec:makeZ}.  Numerical results that compare the performance of the null space method versus the standard direct solver are given.

All computations were performed on a Mac Pro with a 3.33 GHz 6-core processor and 32 GB of memory.  All matrices arising in finite element methods were assembled using the FreeFEM++ environment \cite{hecht}.  All linear systems were solved using MATLAB R2014a.  Compiled statistics include the number of nonzeros in $M$, $B_2$ (unless obvious), and $Z_1^TAZ_2$, as well as
\begin{itemize}
\item {\em infl}, the inflation, which is $|Z_1^TAZ_2|/|M|$;
\item {\em diff}, which is $\|\bx-\bx_*\|_\infty/\|\bx\|_\infty$, where $\bx$ is the solution computed by the standard solution \verb|x=M\b| and $\bx_*$ is the solution computed by the null space method;
\item {\em Ztime}, which is the time required to construct $Z_1$ and $Z_2$;
\item {\em NStime}, which is the time required for the null space method set up and solution (includes Ztime);
\item {\em Stime}, which is the time required to execute the direct solve \verb|x=M\b|;
\item and {\em speedup}, which is Stime/NStime.
\end{itemize}
The MATLAB {\tt tic} and {\tt toc} functions were used to capture all timing data.
\subsection{ Arrowhead Matrices and the One-Sided Null Space Method}
In Section \ref{sec:sym2} arrowhead matrices were employed to demonstrate the impact of a dense row on direct solver heuristics.  Let $m=1$,  $A=I_n$, $B_1$ and $B_2$ be full vectors (in sparse format) with random entries between 0 and 1, and $C=1$.  The right-hand side vector $\bb$ is also full (in sparse format) with random values between 0 and 1.  When the one-sided null space method described in Algorithm \ref{algons} using the null space basis given in Algorithm \ref{alg1} is applied, the $(n+1)\times(n+1)$ arrowhead matrix is transformed into an $n\times n$ upper triangular matrix, which can be solved very efficiently.  Table \ref{tab:t2} in Appendix \ref{sec:Aarrow} gives statistics obtained for increasing $n$, while the plot on the left in Figure \ref{fig:arrow} shows the run times of both methods.
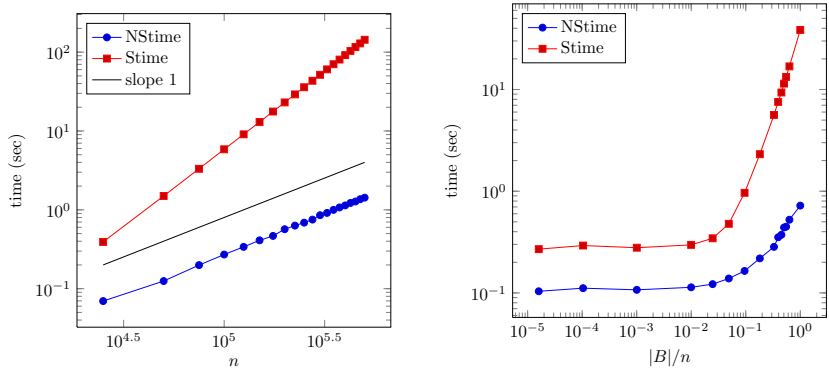
\begin{figure}
\centering
\begin{tikzpicture}[scale=0.65]
\begin{axis}[
width=8cm, height=8cm,
    xmode=log,ymode=log, xlabel={$n$}, ylabel={time (sec)},
    legend entries={NStime, Stime,slope 1},
legend style={nodes=right},
legend pos= north west]
]
\addplot table {
25001	0.06967074
50001	0.125021985
75001	0.19905139
100001	0.271162636
125001	0.339092499
150001	0.410425819
175001	0.466895723
200001	0.568036897
225001	0.630988437
250001	0.68783542
275001	0.751764914
300001	0.855101807
325001	0.913874553
350001	0.998929062
375001	1.072482412
400001	1.137568578
425001	1.222753694
450001	1.27554701
475001	1.36397097
500001	1.427434822
};
\addplot table {
25001	0.392161799
50001	1.498658924
75001	3.317684656
100001	5.857778124
125001	9.066225146
150001	12.9993712
175001	17.64910252
200001	23.02132101
225001	29.10682217
250001	35.87434119
275001	43.37869859
300001	51.61237807
325001	60.4502475
350001	70.2628186
375001	80.48689708
400001	91.61732804
425001	103.6698422
450001	116.1098147
475001	129.0522355
500001	143.5631473
};
\addplot[color=black,solid] table { 
25000	0.2
50000	0.4
100000	0.8
200000	1.6
500000 4.0
};
\end{axis}
\end{tikzpicture}
\hskip 0.2in
\begin{tikzpicture}[scale=0.65]
\begin{axis}[
width=8cm, height=8cm,
    xmode=log,ymode=log, xlabel={$|B|/n$}, ylabel={time (sec)},
    legend entries={NStime, Stime},
legend style={nodes=right},
legend pos= north west]
]
\addplot table {
0.000016	0.103778874
0.000104	0.111338215
0.001004	0.107270915
0.00996	0.113511066
0.0247	0.122020189
0.048876	0.138618902
0.095164	0.164557251
0.181256	0.218534058
0.329436	0.283628878
0.393308	0.352506763
0.450736	0.372760607
0.503208	0.439739155
0.550144	0.447778345
0.632088	0.524410645
1	0.721726573
};
\addplot table {
0.000016	0.269328282
0.000104	0.292294216
0.001004	0.278336501
0.00996	0.296802856
0.0247	0.344320769
0.048876	0.47825459
0.095164	0.96473495
0.181256	2.311932343
0.329436	5.62122231
0.393308	7.558008169
0.450736	9.338915158
0.503208	11.42157258
0.550144	13.29916063
0.632088	16.87130355
1	38.47906043
};
\end{axis}
\end{tikzpicture}
\caption{Log-scale plot of running times for the one-sided null space method and the standard direct solve for arrowhead matrices for increasing $n$ with $|B_1|=n$ (left) and increasing $|B|/n$ with $n=250000$ (right) .}
\label{fig:arrow}
\end{figure}
Similar to the example shown in Section \ref{sec:sym2}, the effect on the increasing density of a row was also analyzed for both methods.  Again, $A=I_n$ and $C=1$, however here $B_1=B_2=B$ is a sparse random vector with entries between 0 and 1 with a varying number of nonzero entries.  Results for $n=250000$ are given in Table \ref{tab:arrow2}, and a plot of the running times for both methods vs. increasing $|B|/n$ is given on the right  in \ref{fig:arrow}.  The standard direct solve time begins to increase sharply when around $10\%$ of the entries in $B$ are nonzero and experiences an overall increase in running time of more than two orders of magnitude over the range of $|B|$.  In contrast, the one-sided null space method run times increase less than one order of magnitude over the entire range of $|B|$.

\subsection{Elimination of Mean Zero Constraints in Finite Element Approximations}
Variational problems arising from PDEs often require unknowns to be in specific subspaces of function spaces in order to be well-posed.  A very common example of this is the condition that the mean value of the unknown over the spatial domain is zero.  When the finite element method is employed to approximate solutions to these problems, conditions such as the mean zero constraint must also be imposed on the finite element subspaces used for approximating the unknowns.  

Several software packages and environments are available to implement the finite element method.  In many of the more recent packages, the end user merely needs to specify the computational mesh, specify which finite element spaces will be used, and provide the variational form of the problem.  While software with these capabilities allows for great flexibility in a very short development time frame, it places certain limitations on the control available to the user.  For example, it may be difficult to specify that an unknown is to be restricted to a subspace with a mean zero constraint, and it may be tedious or inefficient to modify the linear systems assembled by the software.  Thus, it is often the case that the best way to implement a condition such as the mean zero constraint is via the use of a scalar Lagrange multiplier.  In this case, it is straightforward to augment the original linear system produced by the method with an additional row and column that represent the additional degree of freedom and the constraint it represents.  In many cases these augmented rows and columns are dense (if not full).

Typically $\Omega$ will represent a spatial domain with boundary $\Gamma$.  Other notation used in this section includes the usual $L^2(\Omega)$ label for the Lebesgue space of square-integrable functions defined on a spatial domain $\Omega\subset\real^d$ ($d=2,3$) with Lipschitz
continuous boundary $\Gamma$, and $H^1(\Omega)$ for those in $L^2(\Omega)$ with gradients also in $(L^2(\Omega))^d$.  The space $H(\dv,\Omega)$ represents the functions in $(L^2(\Omega))^2$ with divergence in $L^2(\Omega)$.  A subscript of zero on a space indicates that it is the subspace of functions with zero mean. The pairing $(\cdot,\cdot)$ represents the $L^2$ inner product.

Three examples are presented, each demonstrating a different aspect of the behavior of the method.  First, the performance of the null space method is compared to the standard solve for a problem with $|B|=n$ and two different nonzero patterns.  Subsequently, a comparison of the null space method's performance on the Stokes and Navier-Stokes problems on the same domain and mesh is given (with $|B|$ around 11\% of $n$).  Finally, a comparison of the null space method's performance on a dual-mixed problem with a regular nonzero pattern is compared to similar systems with an irregular nonzero pattern (with $|B|$ around 40\% of $n$).  A summary of the comparisons between the null space method and the standard direct solve for largest problem of each of the different examples in given in Table \ref{tab:sum}.
\begin{table}[H]\centering
\begin{small}
\begin{tabular}{r|r|r|r|r|r|r}
Problem	&	Domain	&	$n+1$	&	Inflation	&	NStime	&	Stime	&	Speedup	\\\hline
Poisson	&	Square	&	303602	&	1.57	&	4.717	&	108.684	&	23.04	\\
Poisson	&	Ring-like	&	473786	&	1.96	&	36.037	&	60.448	&	1.68	\\
Mixed Stokes	&	Square	&	394189	&	1.33	&	81.795	&	1349.064	&	16.49	\\
Mixed Navier-Stokes	&	Square	&	394189	&	1.21	&	136.018	&	1357.513	&	9.98	\\
Dual-Mixed Stokes	&	Square	&	720961	&	1.03	&	105.158	&	2926.810	&	27.83	\\
Dual-Mixed Stokes	&	Irregular	&	656314	&	1.03	&	38.662	&	1604.043	&	41.49	
\end{tabular}
\end{small}
\caption{Comparison of standard direct solve and null space method for largest problem of each type.}
\label{tab:sum}
\end{table}

\subsubsection{Poisson Problem with Pure Neumann Conditions}
When the Poisson problem $-\Delta u=f$ in $\Omega$ is augmented with a pure Neumann boundary condition $\nabla u\cdot n=g$ on $\Gamma$, it is only well-posed provided the solution has mean zero over the spatial domain \cite{bre941}.   Employing a scalar Lagrange multiplier to enforce this, the variational problem results in the system 
\[\begin{bmatrix}
A&B^T\\B&0
\end{bmatrix}\begin{bmatrix}
u\\\lambda
\end{bmatrix}=\begin{bmatrix}
f\\0
\end{bmatrix},\]
where $A$ is large and sparse with a (usually small) bandwidth, and $B\in \real^{1\times n}$ is dense, representing $\int_\Omega u \,d\Omega$.  In this example, two different spatial domains $\Omega$ are used: the first is the unit square in $\real^2$ and the finite element mesh is regular, while the second is a ring-like domain with a Delaunay mesh.  Representatives of both meshes are given in Figure \ref{fig:lapmesh}, and example sparsity plots for both problems are given in Figure \ref{fig:lapspy}.   Continuous piecewise linear finite elements were employed to approximate $u$, and $|B|=n$ (completely full).  
\begin{figure}
\centering
\includegraphics[scale=0.2]{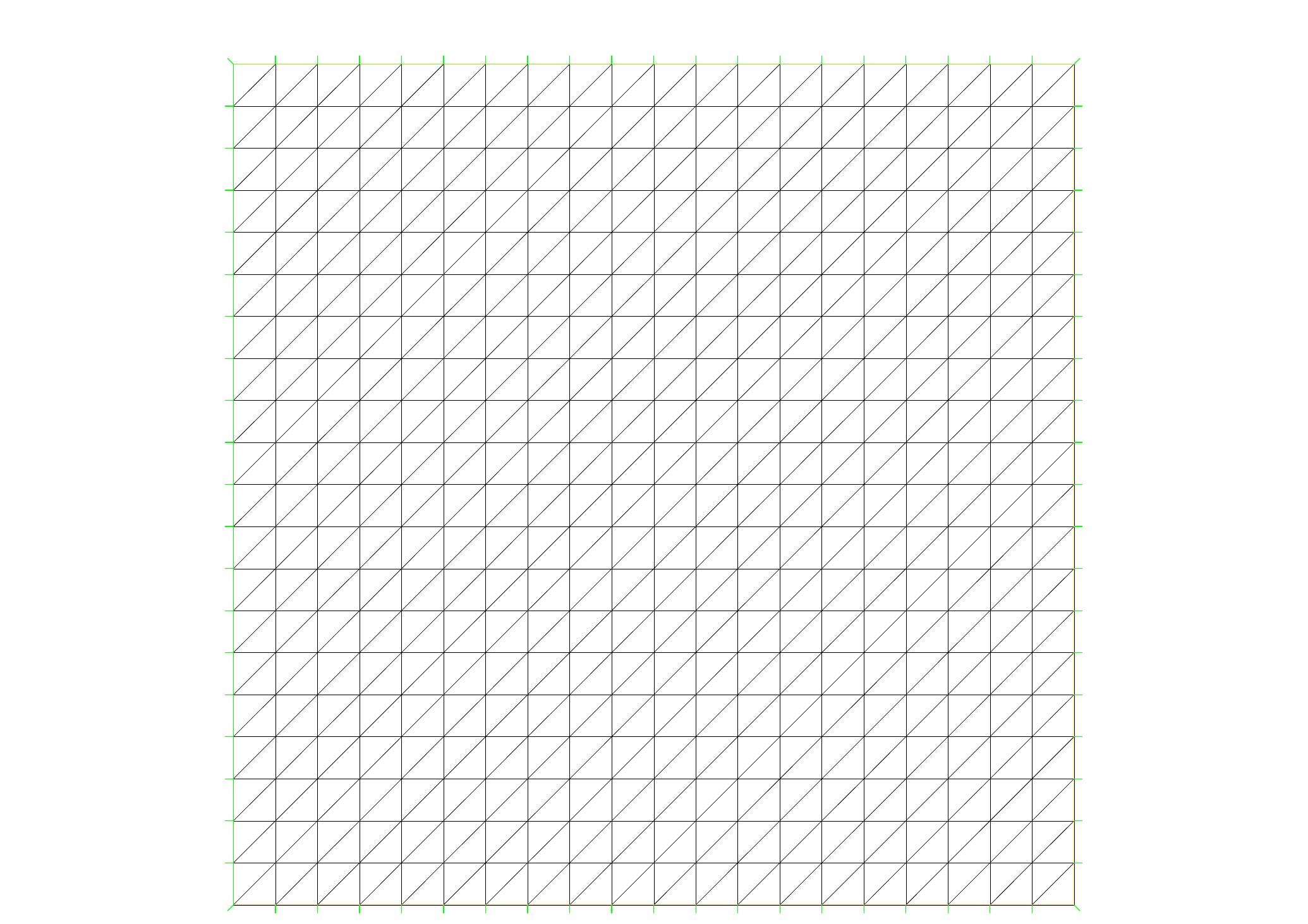}
\includegraphics[scale=0.2]{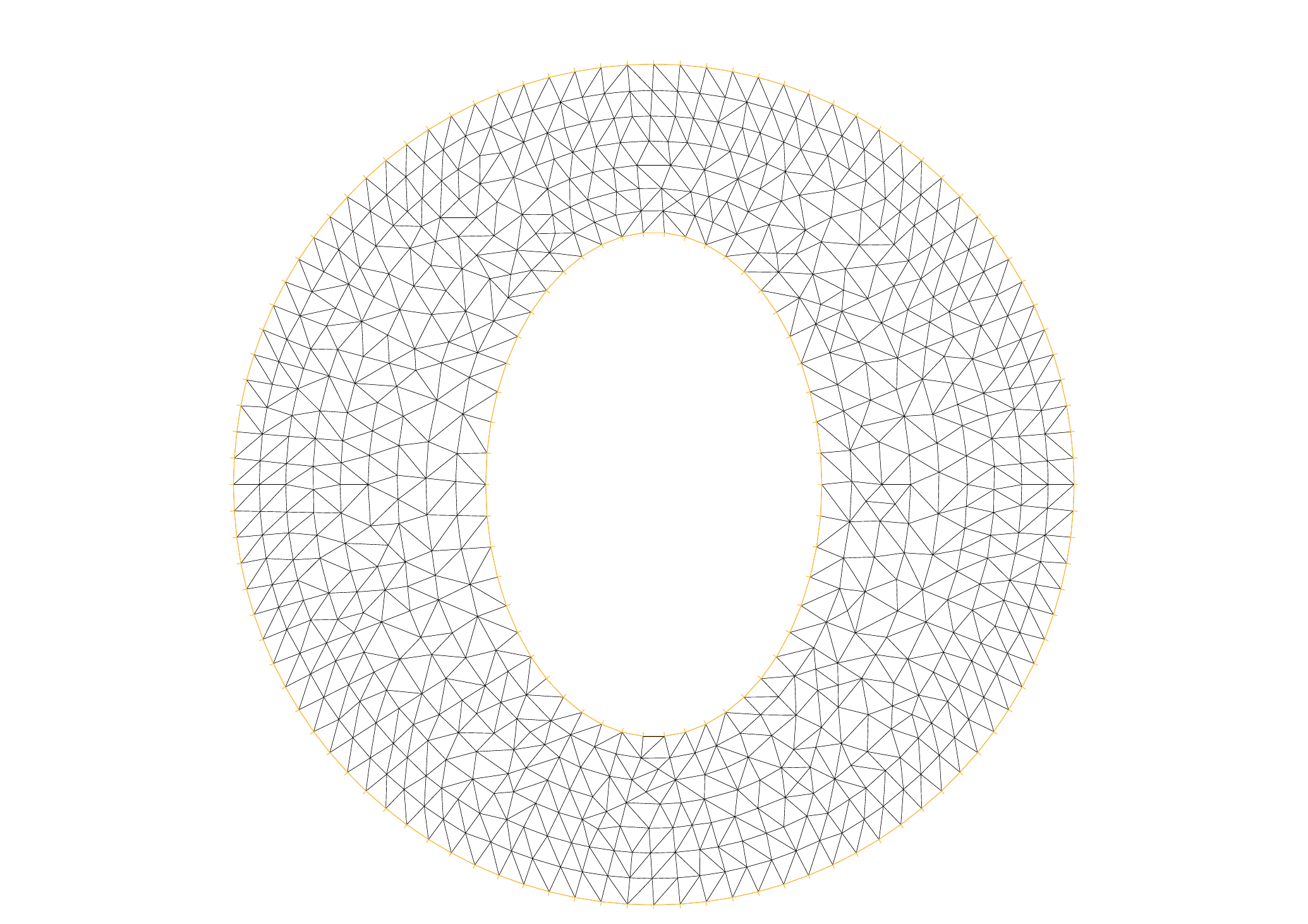}
\caption{Example meshes used for Poisson problem with square domain (left) and  ring-like domain (right).}
\label{fig:lapmesh}
\end{figure}
\begin{figure}
\centering
\includegraphics[scale=0.25]{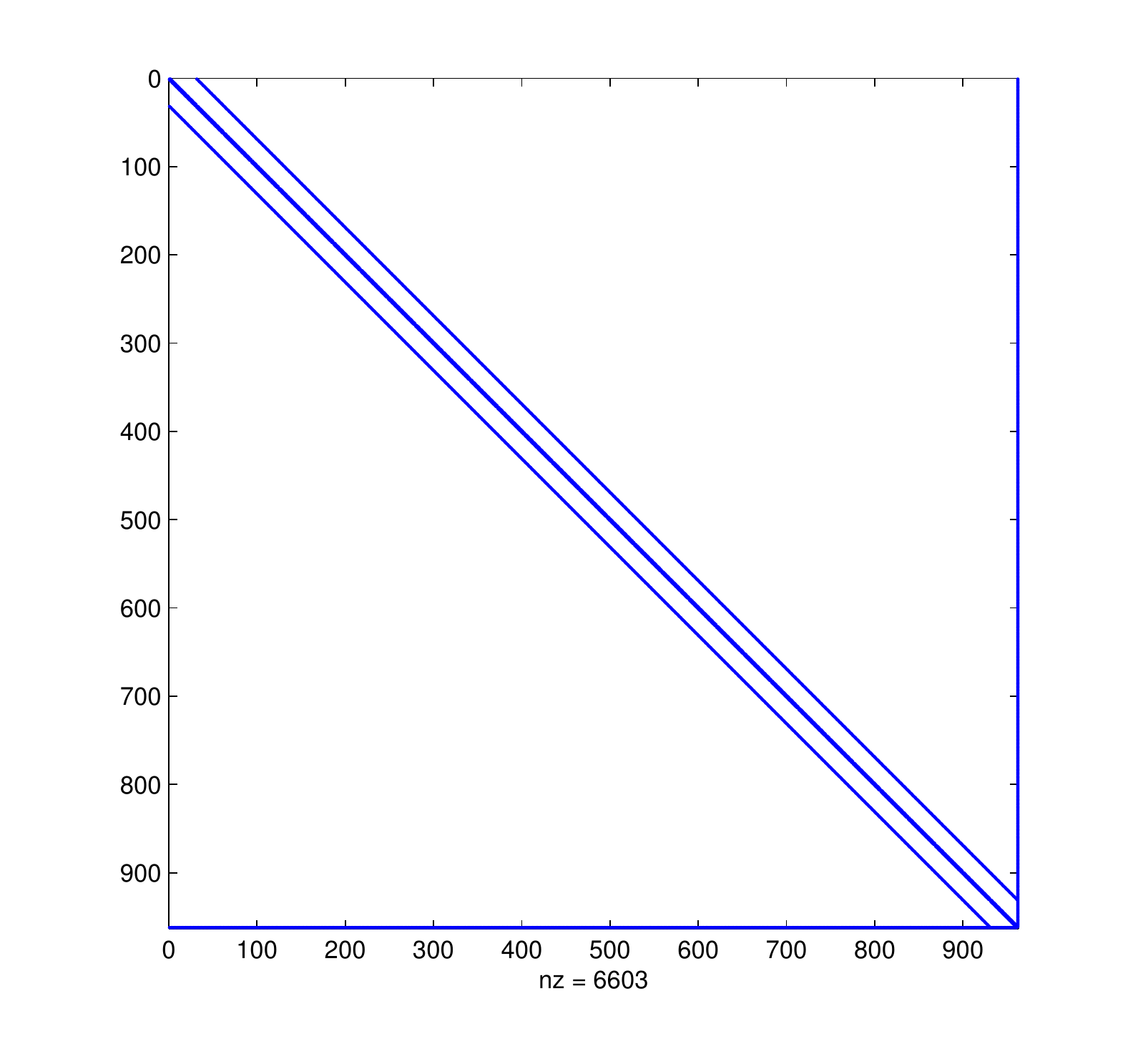}
\hskip 0.25in
\includegraphics[scale=0.25]{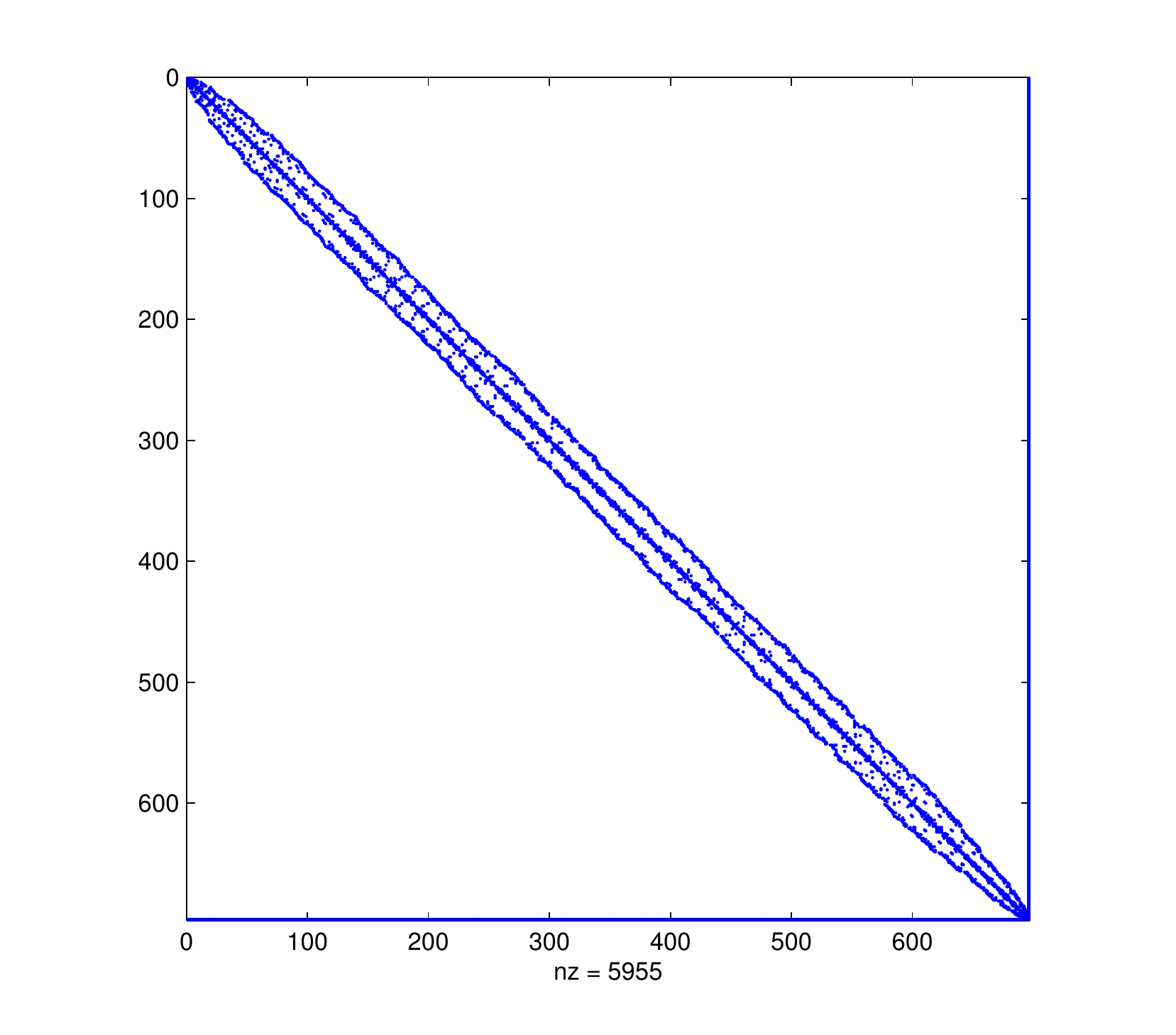}
\caption{Example sparsity plots for $M$ in Poisson problem with square domain (left) and  ring-like domain (right).}
\label{fig:lapspy}
\end{figure}

Results of both the standard solve and null space method are presented in Table \ref{tab:t3} in Appendix \ref{sec:Apoisson}.   For problems of increasing size, using the null space method as a prestructuring technique produces a significant speedup over the original system.
A visualization of the performance of both the null space method and the standard solve is given in Figure \ref{fig:lap}.

To investigate how varying nonzero patterns in $A$ may influence the performance of the null space method, the problem was also solved on a ring-like domain that consists of a circle with an elliptic section removed from the center.  The null space method and the standard solve are compared in Tables \ref{tab:tring} in Appendix \ref{sec:Apoisson}.   In this case, the inflation induced by the null space method is significantly higher than the square domain, almost doubling the number of nonzero entries in $M$.  For this mesh, the number of neighboring elements is generally higher than that of the square mesh, as $A$ averages around 8 nonzero entries per row for the ring-like domain, compared to an average of around 6 nonzero entries per row for the square domain.  The problem on the square domain also enjoys a more uniform nonzero pattern than the ring-like domain.  Overall, prestructuring the system on the ring-like domain with the null space method does not give much of an improvement over the direct solve of the original system.  
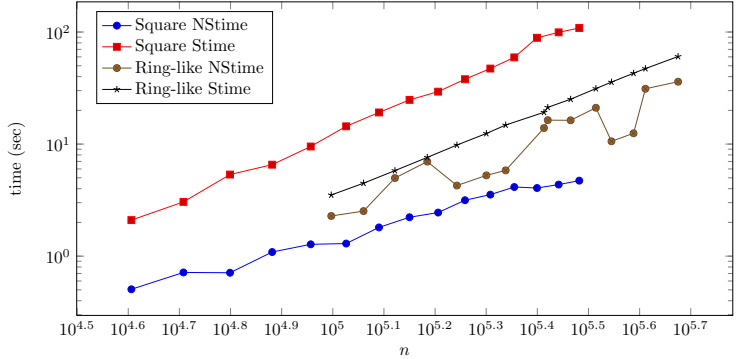
\begin{figure}
\centering
\begin{tikzpicture}[scale=0.65]
\begin{axis}[
width=15cm, height=8cm,
    xmode=log,ymode=log, xlabel={$n$}, ylabel={time (sec)},
    legend entries={Square NStime, Square Stime, Ring-like NStime, Ring-like Stime},
legend style={nodes=right},
legend pos= north west]
]
\addplot table {
40402	0.506
51077	0.715
63002	0.711
76177	1.088
90602	1.275
106277	1.295
123202	1.804
141377	2.223
160802	2.448
181477	3.155
203402	3.538
226577	4.138
251002	4.055
276677	4.351
303602	4.717
};
\addplot table {
40402	2.097
51077	3.048
63002	5.347
76177	6.551
90602	9.513
106277	14.402
123202	19.159
141377	24.801
160802	29.365
181477	37.912
203402	47.218
226577	59.386
251002	88.812
276677	99.339
303602	108.684
};
\addplot table{
99384	2.281799111
114923	2.52331192
132370	4.977851894
153152	6.976833422
175003	4.265754926
199825	5.254881852
217959	5.826424433
259022	13.89963403
263442	16.35098624
292019	16.30121916
327209	21.092035
350997	10.58647772
387725	12.4873793
408972	31.14990252
473786	36.03698308
};
\addplot table{
99384	3.514439111
114923	4.475827227
132370	5.796822507
153152	7.603971048
175003	9.804890584
199825	12.42984755
217959	14.78779719
259022	19.21815157
263442	21.2622902
292019	25.13389577
327209	31.27895974
350997	35.73339423
387725	42.86985074
408972	47.19543914
473786	60.44770178
};
\end{axis}
\end{tikzpicture}
\caption{Log-scale plot of running times for the null space method and the standard direct solve for the pure Neumann problem on a square domain and ring-like domain.}
\label{fig:lap}
\end{figure}

\subsubsection{Mixed Stokes and Navier-Stokes Problems}
The Navier-Stokes equations model the flow of an incompressible fluid in a spatial domain.  The problem with pure Dirichlet conditions is: given $f\in(L^2(\Omega))^d$, find $(u,p)$ satisfying
\begin{alignat}{2}
-\dv\left(\nu(\nabla u+\nabla u^T)\right)+(u\cdot\nabla)u+\nabla p &= f \quad&\text{ in }\Omega,
\nonumber\\
\dv(u) &= 0 \quad&\text{ in }\Omega,\label{eq:nse1} \\
u &= u_\Gamma \quad&\text{ on } \Gamma. \nonumber
\end{alignat}
Here $u$ is velocity, $p$ is pressure, and $\nu\ge 0$ is the kinematic viscosity.   Let $L^2(\Omega)_0$ represent the functions in $L^2(\Omega)$ with zero average.  The classical mixed finite element method for \eqref{eq:nse1} poses the following variational problem: find $(u,p)\in (H^1(\Omega))^d\times L^2_0(\Omega)$ such that
\begin{alignat}{2}
\nu (\nabla u,\nabla v)+((u\cdot\nabla)u,v)- (p,\dv(v)) &= (f,v) \quad&\forall v\in (H^1(\Omega))^d,
\label{eq:nse2a}\\
(\dv(u),q) &= 0 \quad&\forall q\in L^2_0(\Omega).\label{eq:nse2b} 
\end{alignat}
The Stokes problem is represented by \eqref{eq:nse1} without the nonlinear $(u\cdot\nabla)u$ term.  To implement the mean zero constraint on $p$ and $q$, a scalar Lagrange multiplier $\lambda$ can be introduced, and \eqref{eq:nse2a}--\eqref{eq:nse2b} is augmented by adding $\lambda\int_\Omega q\,d\Omega$ to the left hand side in \eqref{eq:nse2b} and the equation $\mu\int_\Omega p\,d\Omega=0$.  

Table \ref{tab:stokes} gives the results of the  Stokes problem using the standard Taylor-Hood finite elements  for velocity and pressure on a rectangular domain \cite{gir861}.   This results in a dense $B$ with around $0.11n$ nonzero entries.  Results obtained for the same computations for the fully nonlinear Navier-Stokes equations are displayed in Table \ref{tab:ns}.  An absence of data in the ``diff'' and ``Stime'' columns indicates that the standard direct solve was not attempted.  In Table \ref{tab:stokes} it is observed that the null space method can solve a system with $n+1=736030$ in less time than  the standard direct solve in a system around one fourth of the size ($n+1=183076$).  An example of the sparsity pattern of both $M$ and $Z^TAZ$ for the Navier-Stokes problem
is given in Figure \ref{fig:stokes} and the comparison of times is given in Figure \ref{fig:ns}.  While the null space method does not in general demonstrate as much speedup for Navier-Stokes as it does Stokes, it is important to note that several Navier-Stokes systems must be solved during an iterative  nonlinear solve, so the speedup will be repeatedly realized.

\begin{figure}
\centering
\includegraphics[scale=0.3]{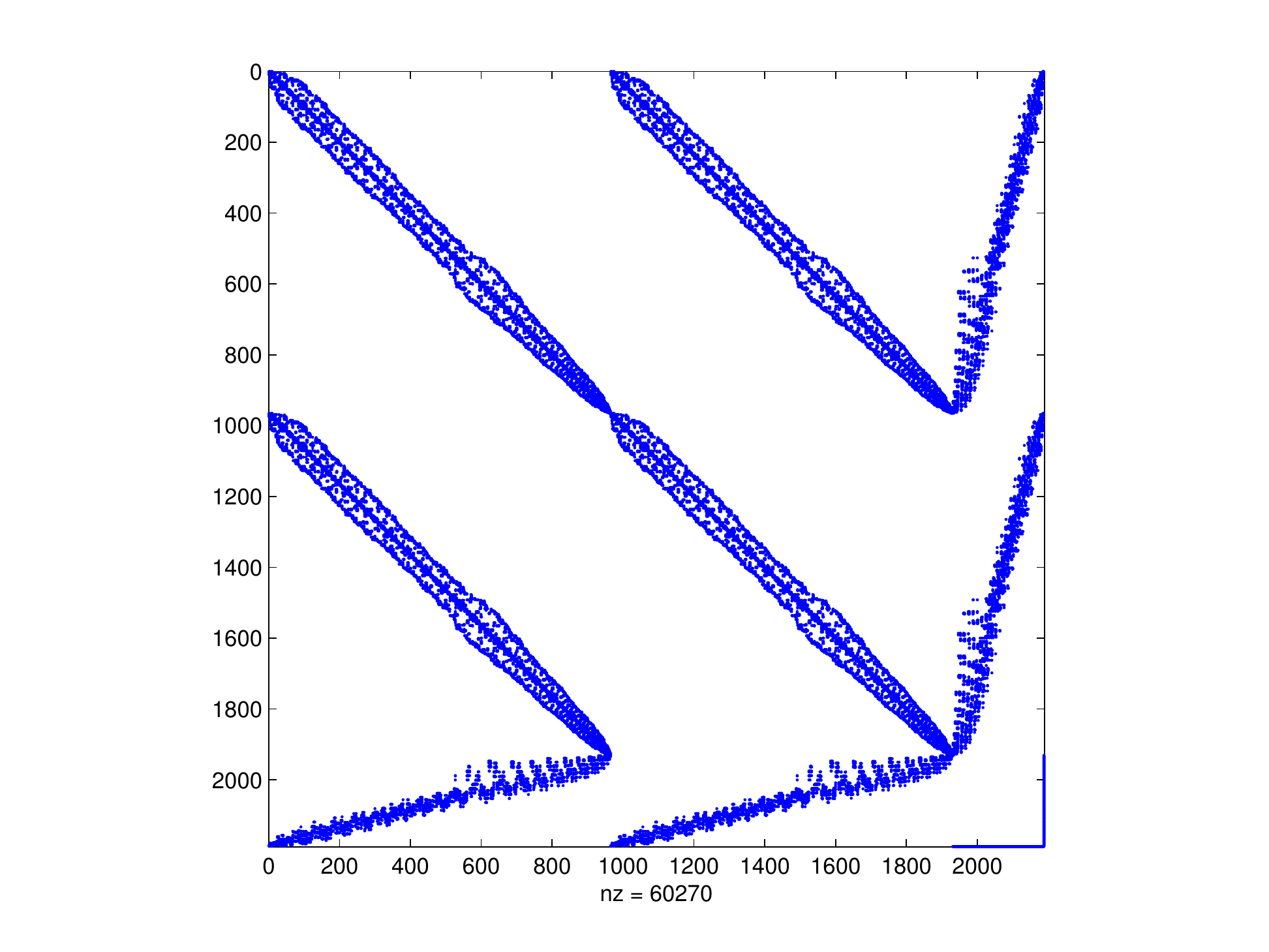}
\includegraphics[scale=0.3]{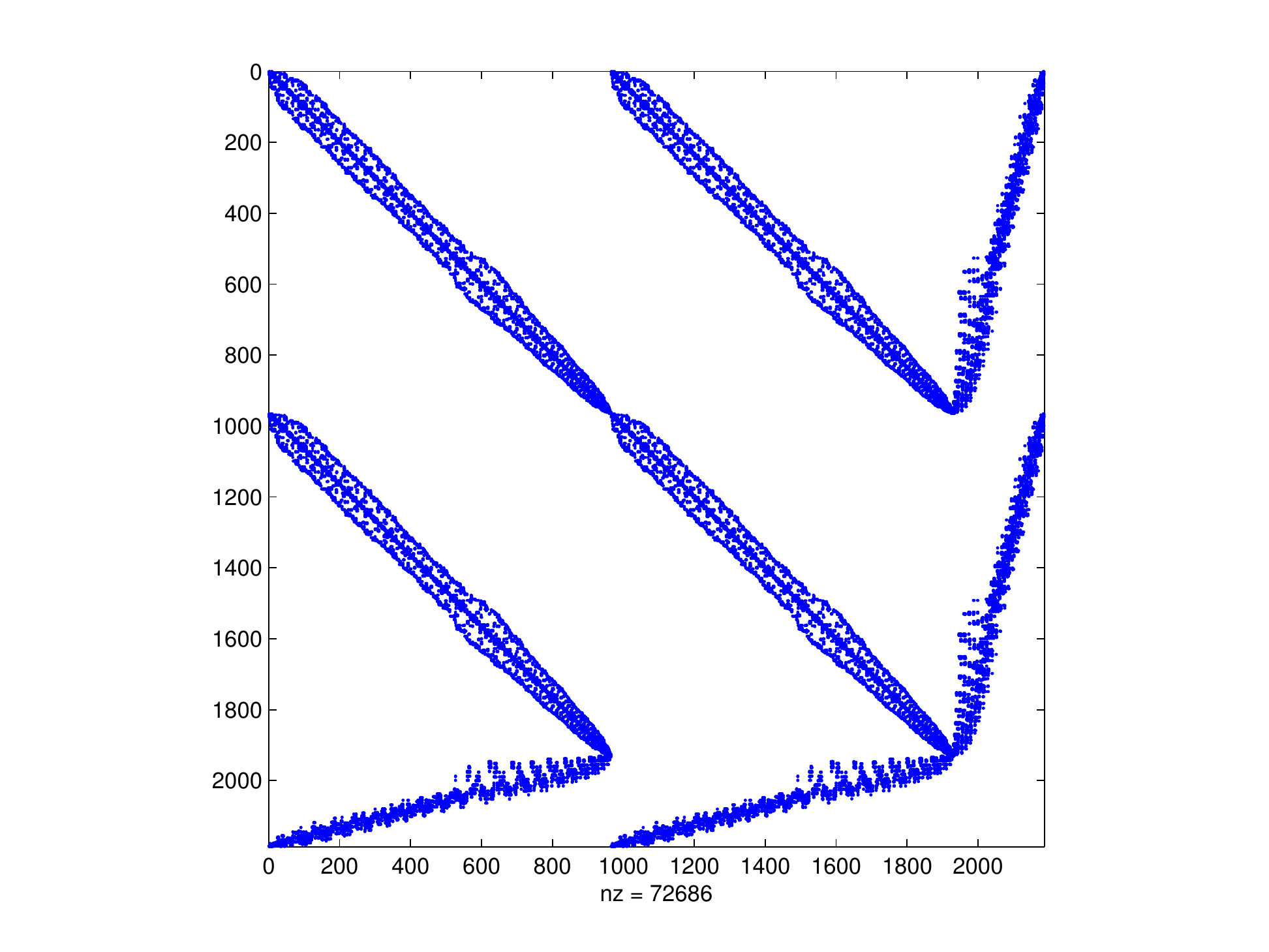}
\caption{Example sparsity plot of $M$ (left) and $Z^TAZ$ (right) for the Navier-Stokes problem.}
\label{fig:stokes}
\end{figure}

\begin{figure}
\centering
\begin{tikzpicture}[scale=0.65]
\begin{axis}[
width=15cm, height=9cm,
    xmode=log,ymode=log, xlabel={$n$}, ylabel={time (sec)},
    legend entries={Stokes NStime, Stokes Stime, Navier-Stokes NStime, Navier-Stokes Stime},
legend style={nodes=right},
legend pos= north west]
]
\addplot table {
2188	0.033
4729	0.084
8305	0.229
12772	0.369
18517	0.652
26854	0.996
32995	1.235
43078	1.866
51460	2.551
62722	3.498
73372	4.545
86866	6.509
99460	6.811
117391	9.315
132352	11.119
145864	12.530
183076	17.807
207226	22.549
221359	25.953
243286	64.222
322522	62.839
358489	132.936
374179	74.996
394189	72.277
466663	118.439
527122	136.858
590380	144.462
649570	211.806
736030	279.488
};
\addplot table {
2188	0.040
4729	0.084
8305	0.333
12772	1.474
18517	2.802
26854	6.770
32995	9.262
43078	15.819
51460	26.073
62722	42.820
73372	41.384
86866	77.948
99460	74.425
117391	127.044
132352	116.803
145864	193.427
183076	375.890
207226	411.158
221359	360.559
243286	377.703
322522	1206.523
358489	1296.813017
374179	1292.916187
394189	1226.410117
};
\addplot table {
2188	0.039
4729	0.092
8305	0.201
12772	0.312
18517	0.560
26854	0.952
32995	1.126
43078	1.679
51460	2.476
62722	2.863
73372	4.036
86866	5.286
99460	11.722
117391	8.681
132352	10.859
145864	12.294
183076	38.428
207226	21.910
221359	47.819
243286	30.516
322522	124.250
358489	126.839
394189	136.018
466663	268.038
527122	147.946
590380	324.469
649570	448.260
};
\addplot table {
2188	0.027
4729	0.104
8305	0.244
12772	0.703
18517	1.151
26854	2.327
32995	9.156
43078	8.072
51460	13.421
62722	18.841
73372	27.165
86866	63.046
99460	64.718
117391	109.731
132352	145.626
145864	144.746
183076	206.909
207226	1089.682
221359	310.419
243286	530.607
322522	2475.251
358489	958.905
394189	1357.513
};
\end{axis}
\end{tikzpicture}
\caption{Log-scale plot of running times for the null space method and the standard direct solve for Stokes and Navier-Stokes.}
\label{fig:ns}
\end{figure}
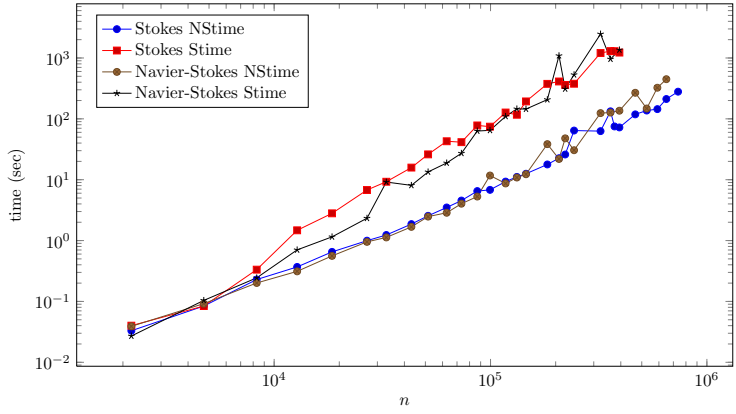

\subsubsection{Dual-Mixed Stokes  Problems}
While the velocity and pressure are the primary unknowns in the classical mixed method for the Stokes and Navier-Stokes equations, the dual-mixed method \cite{how095,wal091,wal101} for Stokes and Navier-Stokes approximates the fluid stress ($S$), velocity ($u$), and velocity gradient ($G$) directly.  Specifically, the variational problem is: given $f\in (L^2(\Omega))^d$ and $g\in (H^{1/2}(\Gamma))^d$, find $(G,u,S)\in (L^2(\Omega))^{d\times d}\times (L^2(\Omega))^{d}\times \mathbb{S}$ satisfying
\begin{align}
  \nu(G,H) - (1/2)(u\otimes u, H) - (S, H) 
  & = 0, & \forall H  \in (L^2(\Omega))^{d\times d}, \nonumber \\ 
  (1/2)(G u, v) - (\dv(S), v) 
  & = (f, v), & \forall v  \in (L^2(\Omega))^{d}, \label{eqn:dmNS} \\ 
  (G,T) + (u, \dv(T)) 
  & = \int_\Gamma u_\Gamma\cdot Tn\,d\Gamma, & \forall T  \in \mathbb{S}, \nonumber
\end{align}
where 
\[\mathbb{S} = \left\{T\in (H(\dv,\Omega))^{d\times d}\sst \int_\Omega tr(T)\,d\Omega=0\right\}.\]
The mean trace zero constraint on $(H(\dv,\Omega))^{d\times d}$ can be relaxed by incorporating a scalar Lagrange multiplier $\lambda\in\real$, yielding the linear system 
 \begin{equation}\begin{bmatrix}
A_G & A_{Gu} & B_1^T & 0\\
A_{uG} & 0&  B_2^T & 0\\
B_1& B_2 & 0 &C^T\\
0 & 0&C&0
\end{bmatrix}
\begin{bmatrix}
G\\u\\S\\\lambda
\end{bmatrix}=\begin{bmatrix}
0\\F\\g\\0
\end{bmatrix},
\label{eq:dmstokesmat}
\end{equation}
where $C$ is the dense row that represents $\int_\Omega tr(S)\,d\Omega$.
Linear systems for the dual-mixed Stokes problem (without the $A_{Gu}$ and $A_{uG}$ blocks in \eqref{eq:dmstokesmat}) were constructed for two different spatial domains in $\real^2$: a unit square and an irregular-shaped domain.  First-order Raviart-Thomas elements for $S$ and discontinuous piecewise linear elements for $G$ and $u$ were employed, resulting in $|B|\approx 0.4n$.   Comparisons of the sparsity plots for the two different domains are presented in Figure \ref{fig:dmstokes}.  Results from the full null space method and the standard direct solve are given in Tables \ref{tab:dmstokes} and \ref{tab:dmstokesa}, and the comparisons are summarized in Figure \ref{fig:dmstokesa}.  The speedup exhibited by the null space method is even greater for the irregular domain, suggesting that the removal of the dense row and column allows the algorithms in the direct solver to take a greater advantage of the lack of structure in the irregular domain problem.  This can be observed by examining the supernodal column elimination tree \cite{demmel99} (obtained using the symbolic factorization option of the \verb|umfpack2| routine provided by SuiteSparse \cite{davisSS}) of both $M$ and $Z^TAZ$ given for $n+1=139616$ in Figure \ref{fig:tree}.  Prestructuring $M$ with the null space method effectively reduces the height of the tree from 23220 to 98, vastly reducing the intercolumn dependencies.

\begin{figure}
\centering
\includegraphics[scale=0.25]{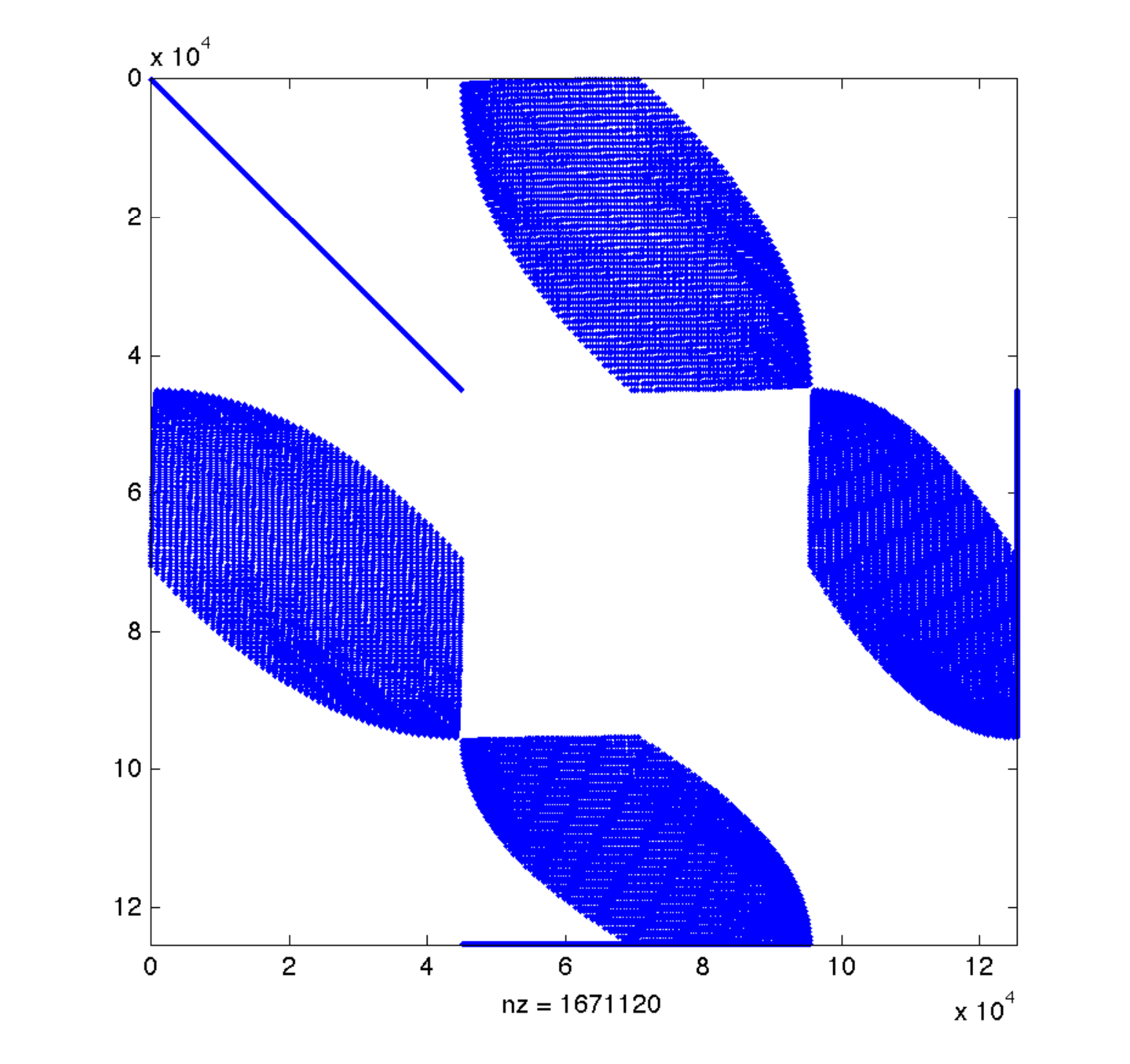}
\hskip 0.1in
\includegraphics[scale=0.25]{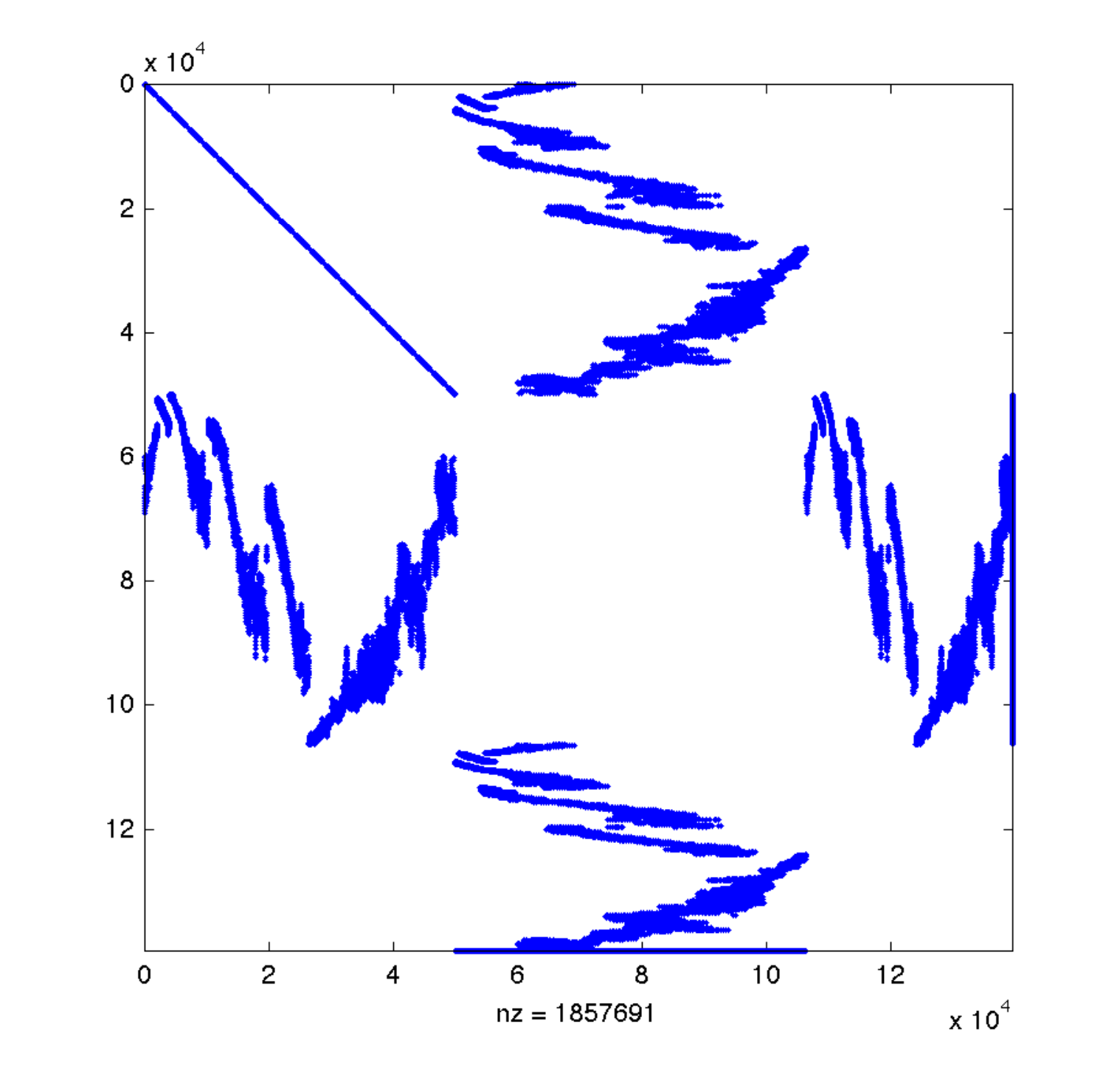}
\hskip 0.1in
\includegraphics[scale=0.2]{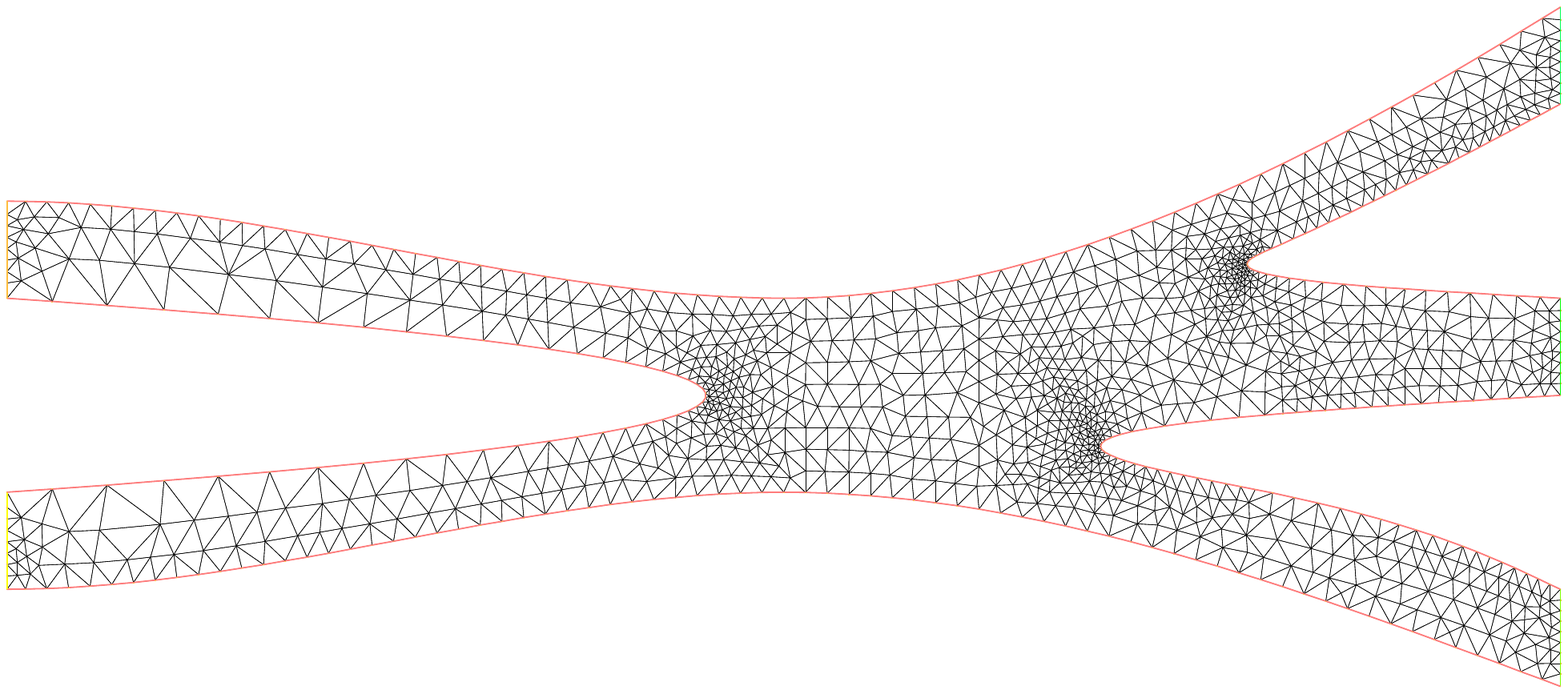}
\caption{Example sparsity plots of dual-mixed stokes system on square domain (left) and irregular-shaped domain (center), and irregular-shaped domain mesh (right).}
\label{fig:dmstokes}
\end{figure}
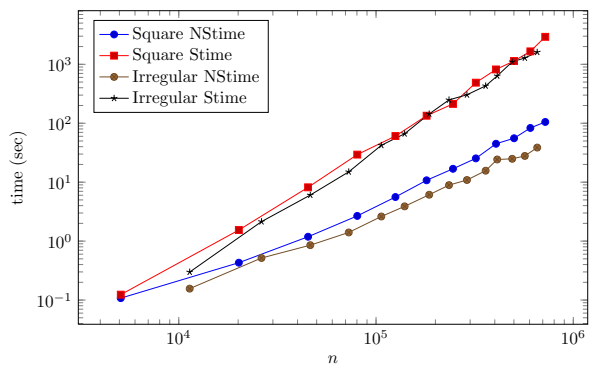
\begin{figure}
\centering
\begin{tikzpicture}[scale=0.65]
\begin{axis}[
width=12cm, height=8cm,
    xmode=log,ymode=log, xlabel={$n$}, ylabel={time (sec)},
    legend entries={Square NStime, Square Stime, Irregular NStime, Irregular Stime},
legend style={nodes=right},
legend pos= north west]
]
\addplot table {
5081	0.107784793
20161	0.430277864
45241	1.186198445
80321	2.675769318
125401	5.599631403
180481	10.75938984
245561	16.8670154
320641	25.34948645
405721	44.85087991
500801	55.61781144
605881	83.16007014
720961	105.1580089
};
\addplot table {
5081	0.123519295
20161	1.541800648
45241	8.180284204
80321	29.36862138
125401	60.64343569
180481	134.2625852
245561	213.088345
320641	488.0833347
405721	818.0186287
500801	1138.316574
605881	1657.527062
720961	2926.809731
};
\addplot table {
11359	0.156102172
26265	0.516805124
46421	0.85362569
72727	1.402187337
106360	2.617292788
139616	3.8888757
186072	6.123803578
234428	8.914583328
288984	10.88550501
359490	15.65494506
411446	24.27388225
490102	24.91889986
567858	27.87281518
656314	38.66237248
};
\addplot table {
11359	0.298645989
26265	2.140558437
46421	6.046420909
72727	14.9809734
106360	42.1990819
139616	66.35144904
186072	143.8916848
234428	248.9575103
288984	303.9236028
359490	430.5084797
411446	635.7332143
490102	1099.456987
567858	1281.731702
656314	1604.043034
};
\end{axis}
\end{tikzpicture}
\caption{Log-scale plot of running times for the null space method and the standard direct solve for dual-mixed Stokes on square and irregular domains.}
\label{fig:dmstokesa}
\end{figure}
\begin{figure}
\centering
\includegraphics[scale=0.25]{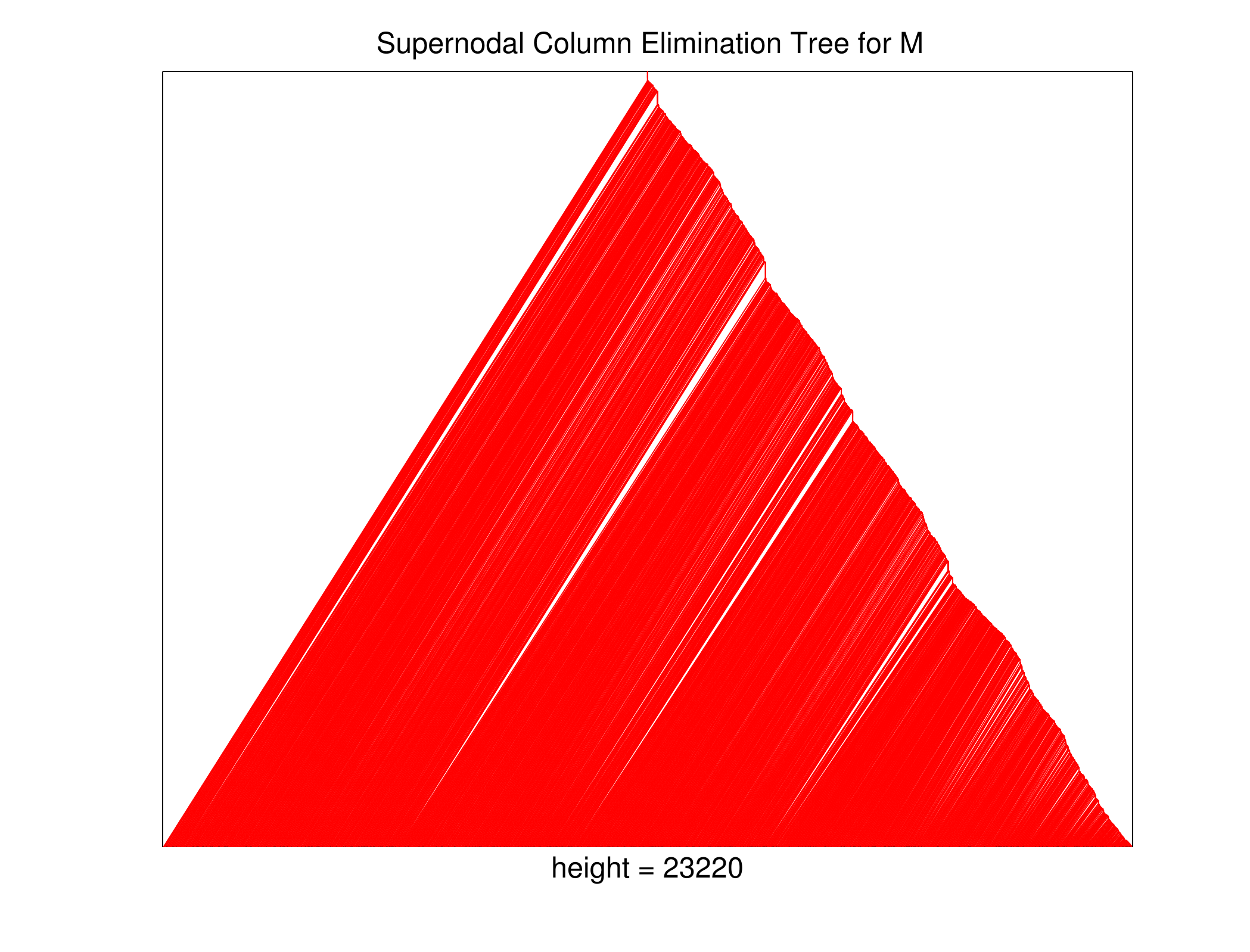}
\hskip 0.25in
\includegraphics[scale=0.25]{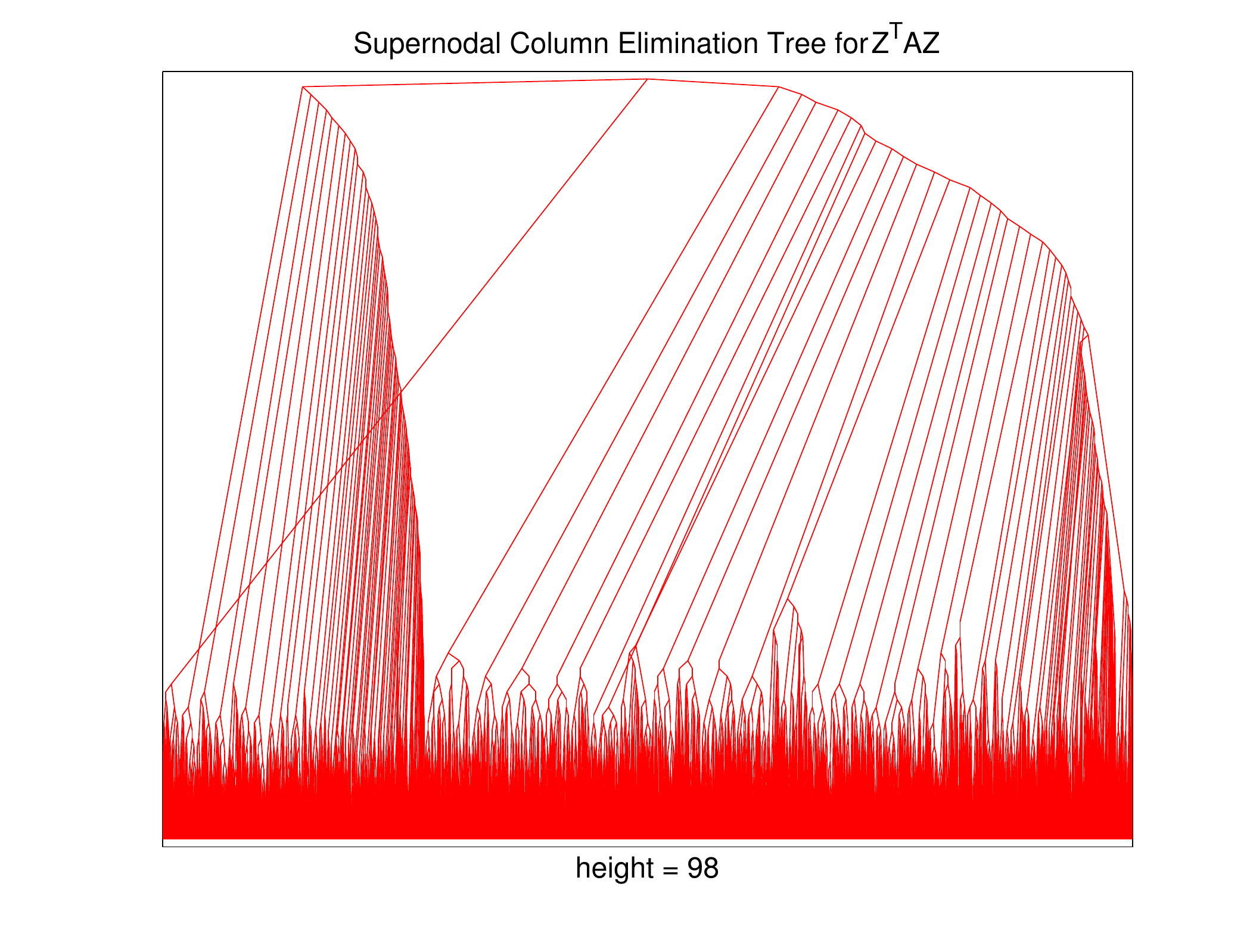}
\caption{Plots of supernodal column elimination tree for $M$ (left) and $Z^TAZ$ (right) for the dual-mixed stokes problem on irregular-shaped domain with $n+1=139616$.}
\label{fig:tree}
\end{figure}

\section{Summary}\label{sec:sum}
The null space methods presented here can provide end users of direct solvers a means to avoid decreases in solver performance primarily due to the presence of dense rows.  The methods  are easy to implement, and the construction of the null space basis can be done in a fraction of the time required by the linear solver.  Several examples demonstrate significant speedup in solving the prestructured system over the original system, however the performance of the methods appears to be limited when the inflation is significant.  
 More investigation is required to determine under what conditions systems with the two-sided or one-sided null space method are more likely to be beneficial.
While improvements in direct solver performance are continually advancing the state of the art, the techniques described here will currently allow end users of direct solvers to solve a larger class of problems with existing resources.

\appendix
\section{Tables For Arrowhead Matrices}\label{sec:Aarrow}
\begin{table}[H]\centering
\begin{small}
\begin{tabular}{r|r|r|r|r|r|r|r|r}
$n+1$	&	$|M|$	&	$|Z_1^TAZ_2|$	&	infl	&	diff	&	Ztime	&	NStime	&	Stime	&	speedup	\\\hline
25001	&	75001	&	74998	&	1.00	&	3.357E-13	&	0.013	&	0.070	&	0.392	&	5.63	\\
50001	&	150001	&	149998	&	1.00	&	3.387E-12	&	0.018	&	0.125	&	1.499	&	11.99	\\
75001	&	225001	&	224998	&	1.00	&	2.649E-12	&	0.027	&	0.199	&	3.318	&	16.67	\\
100001	&	300001	&	299998	&	1.00	&	2.140E-12	&	0.037	&	0.271	&	5.858	&	21.60	\\
125001	&	375001	&	374998	&	1.00	&	1.008E-11	&	0.048	&	0.339	&	9.066	&	26.74	\\
150001	&	450001	&	449998	&	1.00	&	8.477E-12	&	0.056	&	0.410	&	12.999	&	31.67	\\
175001	&	525001	&	524998	&	1.00	&	3.956E-12	&	0.065	&	0.467	&	17.649	&	37.80	\\
200001	&	600001	&	599998	&	1.00	&	3.743E-12	&	0.077	&	0.568	&	23.021	&	40.53	\\
225001	&	675001	&	674998	&	1.00	&	4.426E-12	&	0.089	&	0.631	&	29.107	&	46.13	\\
250001	&	750001	&	749998	&	1.00	&	1.796E-11	&	0.100	&	0.688	&	35.874	&	52.16	\\
275001	&	825001	&	824998	&	1.00	&	2.039E-11	&	0.114	&	0.752	&	43.379	&	57.70	\\
300001	&	900001	&	899998	&	1.00	&	2.273E-12	&	0.125	&	0.855	&	51.612	&	60.36	\\
325001	&	975001	&	974998	&	1.00	&	1.068E-11	&	0.138	&	0.914	&	60.450	&	66.15	\\
350001	&	1050001	&	1049998	&	1.00	&	3.340E-11	&	0.151	&	0.999	&	70.263	&	70.34	\\
375001	&	1125001	&	1124998	&	1.00	&	3.683E-12	&	0.161	&	1.072	&	80.487	&	75.05	\\
400001	&	1200001	&	1199998	&	1.00	&	2.944E-11	&	0.173	&	1.138	&	91.617	&	80.54	\\
425001	&	1275001	&	1274998	&	1.00	&	4.299E-12	&	0.193	&	1.223	&	103.670	&	84.78	\\
450001	&	1350001	&	1349998	&	1.00	&	6.231E-11	&	0.199	&	1.276	&	116.110	&	91.03	\\
475001	&	1425001	&	1424998	&	1.00	&	3.010E-12	&	0.214	&	1.364	&	129.052	&	94.62	\\
500001	&	1500001	&	1499998	&	1.00	&	3.455E-11	&	0.224	&	1.427	&	143.563	&	100.57	\\
\end{tabular}
\end{small}
\caption{Comparison of standard direct solve and one-sided null space method for arrowhead matrices for increasing $n$ and $|B_1|=n$.}
\label{tab:t2}
\end{table}

\begin{table}\centering
\begin{small}
\begin{tabular}{r|r|r|r|r|r|r|r|r}
$|M|$	&	$|B|$	&	$|Z^TAZ|$	&	infl	&	diff	&	Ztime	&	NStime	&	Stime	&	speedup	\\\hline
250007	&	4	&	250004	&	1.00	&	2.79E-16	&	0.0085	&	0.104	&	0.269	&	2.60	\\
250051	&	26	&	250048	&	1.00	&	1.44E-15	&	0.0079	&	0.111	&	0.292	&	2.63	\\
250501	&	251	&	250498	&	1.00	&	1.04E-13	&	0.0079	&	0.107	&	0.278	&	2.59	\\
254979	&	2490	&	254976	&	1.00	&	7.64E-13	&	0.0080	&	0.114	&	0.297	&	2.61	\\
262349	&	6175	&	262346	&	1.00	&	7.35E-12	&	0.0101	&	0.122	&	0.344	&	2.82	\\
274437	&	12219	&	274434	&	1.00	&	1.68E-12	&	0.0120	&	0.139	&	0.478	&	3.45	\\
297581	&	23791	&	297578	&	1.00	&	4.27E-10	&	0.0245	&	0.165	&	0.965	&	5.86	\\
340627	&	45314	&	340624	&	1.00	&	6.71E-11	&	0.0330	&	0.219	&	2.312	&	10.58	\\
414717	&	82359	&	414714	&	1.00	&	2.29E-10	&	0.0449	&	0.284	&	5.621	&	19.82	\\
446653	&	98327	&	446650	&	1.00	&	5.93E-11	&	0.0508	&	0.353	&	7.558	&	21.44	\\
475367	&	112684	&	475364	&	1.00	&	2.16E-10	&	0.0567	&	0.373	&	9.339	&	25.05	\\
501603	&	125802	&	501600	&	1.00	&	1.20E-09	&	0.0593	&	0.440	&	11.422	&	25.97	\\
525071	&	137536	&	525068	&	1.00	&	5.00E-10	&	0.0649	&	0.448	&	13.299	&	29.70	\\
566043	&	158022	&	566040	&	1.00	&	2.95E-09	&	0.0725	&	0.524	&	16.871	&	32.17	\\
750001	&	250001	&	749998	&	1.00	&	8.54E-10	&	0.1028	&	0.722	&	38.479	&	53.32	
\end{tabular}
\end{small}
\caption{Comparison of standard direct solve and one-sided null space method for arrowhead matrices, $n=250000$ and increasing $|B|$.}
\label{tab:arrow2}
\end{table}

\section{Tables for Poisson Problem}\label{sec:Apoisson}

\begin{table}[H]\centering
\begin{small}
\begin{tabular}{r|r|r|r|r|r|r|r|r}
$n+1$	&	$|M|$	&	$|Z^TAZ|$	&	infl	&	diff	&	Ztime	&	NStime	&	Stime	&	speedup	\\\hline
40402	&	282003	&	442788	&	1.57	&	1.88E-12	&	0.025	&	0.506	&	2.097	&	4.14	\\
51077	&	356628	&	560013	&	1.57	&	7.02E-12	&	0.022	&	0.715	&	3.048	&	4.26	\\
63002	&	440003	&	690988	&	1.57	&	7.39E-12	&	0.035	&	0.711	&	5.347	&	7.52	\\
76177	&	532128	&	835713	&	1.57	&	8.49E-12	&	0.032	&	1.088	&	6.551	&	6.02	\\
90602	&	633003	&	994188	&	1.57	&	9.03E-11	&	0.038	&	1.275	&	9.513	&	7.46	\\
106277	&	742628	&	1166413	&	1.57	&	7.63E-11	&	0.050	&	1.295	&	14.402	&	11.12	\\
123202	&	861003	&	1352388	&	1.57	&	2.00E-10	&	0.060	&	1.804	&	19.159	&	10.62	\\
141377	&	988128	&	1552113	&	1.57	&	3.11E-10	&	0.071	&	2.223	&	24.801	&	11.16	\\
160802	&	1124003	&	1765588	&	1.57	&	4.59E-10	&	0.081	&	2.448	&	29.365	&	12.00	\\
181477	&	1268628	&	1992813	&	1.57	&	5.90E-10	&	0.084	&	3.155	&	37.912	&	12.02	\\
203402	&	1422003	&	2233788	&	1.57	&	3.61E-10	&	0.096	&	3.538	&	47.218	&	13.35	\\
226577	&	1584128	&	2488513	&	1.57	&	2.85E-10	&	0.104	&	4.138	&	59.386	&	14.35	\\
251002	&	1755003	&	2756988	&	1.57	&	6.12E-10	&	0.119	&	4.055	&	88.812	&	21.90	\\
276677	&	1934628	&	3039213	&	1.57	&	5.25E-09	&	0.131	&	4.351	&	99.339	&	22.83	\\
303602	&	2123003	&	3335188	&	1.57	&	3.92E-09	&	0.139	&	4.717	&	108.684	&	23.04	
\end{tabular}
\end{small}
\caption{Comparison of standard direct solve and null space method for pure Neumann problem on square domain.}
\label{tab:t3}
\end{table}

\begin{table}[H]\centering
\begin{small}
\begin{tabular}{r|r|r|r|r|r|r|r|r}
$n+1$	&	$|M|$	&	$|Z^TAZ|$	&	infl	&	diff	&	Ztime	&	NStime	&	Stime	&	speedup	\\\hline
99384	&	890697	&	1704464	&	1.91	&	1.13E-09	&	0.035	&	2.282	&	3.514	&	1.54	\\
114923	&	1030248	&	1964071	&	1.91	&	3.90E-09	&	0.040	&	2.523	&	4.476	&	1.77	\\
132370	&	1186971	&	2368424	&	2.00	&	1.46E-09	&	0.048	&	4.978	&	5.797	&	1.16	\\
153152	&	1373709	&	2771042	&	2.02	&	3.49E-09	&	0.055	&	6.977	&	7.604	&	1.09	\\
175003	&	1570068	&	3003661	&	1.91	&	8.42E-09	&	0.064	&	4.266	&	9.805	&	2.30	\\
199825	&	1793166	&	3446905	&	1.92	&	1.60E-09	&	0.073	&	5.255	&	12.430	&	2.37	\\
217959	&	1956072	&	3753307	&	1.92	&	2.18E-08	&	0.081	&	5.826	&	14.788	&	2.54	\\
259022	&	2325325	&	4482611	&	1.93	&	2.79E-08	&	0.098	&	13.900	&	19.218	&	1.38	\\
263442	&	2364819	&	4756114	&	2.01	&	3.78E-08	&	0.104	&	16.351	&	21.262	&	1.30	\\
292019	&	2621712	&	5146773	&	1.96	&	1.17E-08	&	0.108	&	16.301	&	25.134	&	1.54	\\
327209	&	2938122	&	5844671	&	1.99	&	7.83E-10	&	0.124	&	21.092	&	31.279	&	1.48	\\
350997	&	3151912	&	6041955	&	1.92	&	3.96E-08	&	0.138	&	10.586	&	35.733	&	3.38	\\
387725	&	3482160	&	6644983	&	1.91	&	5.75E-08	&	0.152	&	12.487	&	42.870	&	3.43	\\
408972	&	3673087	&	7302360	&	1.99	&	7.88E-09	&	0.164	&	31.150	&	47.195	&	1.52	\\
473786	&	4256110	&	8353485	&	1.96	&	1.24E-08	&	0.194	&	36.037	&	60.448	&	1.68	\\
\end{tabular}
\end{small}
\caption{Comparison of standard direct solve and null space method for pure Neumann problem on ring-like domain.}
\label{tab:tring}
\end{table}

\section{Tables for Mixed Stokes and Navier-Stokes}\label{sec:Amixed}

\begin{table}[H]\centering
\begin{footnotesize}
\begin{tabular}{r|r|r|r|r|r|r|r|r|r}
$n+1$	&	$|M|$	&	$|B|$	&	$|Z^TAZ|$	&	infl	&	diff	&	Ztime	&	NStime	&	Stime	&	speedup	\\\hline
2188	&	38576	&	257	&	50992	&	1.32	&	3.72E-15	&	0.0003	&	0.033	&	0.040	&	1.20	\\
4729	&	84820	&	546	&	112226	&	1.32	&	4.14E-15	&	0.0004	&	0.084	&	0.084	&	1.00	\\
8305	&	149502	&	950	&	198222	&	1.33	&	4.97E-15	&	0.0005	&	0.229	&	0.333	&	1.46	\\
12772	&	231004	&	1453	&	305296	&	1.32	&	9.84E-15	&	0.0008	&	0.369	&	1.474	&	3.99	\\
18517	&	337682	&	2098	&	447860	&	1.33	&	1.56E-14	&	0.0016	&	0.652	&	2.802	&	4.30	\\
26854	&	489294	&	3031	&	646072	&	1.32	&	1.38E-14	&	0.0018	&	0.996	&	6.770	&	6.79	\\
32995	&	602306	&	3720	&	796240	&	1.32	&	2.67E-14	&	0.0023	&	1.235	&	9.262	&	7.50	\\
43078	&	786882	&	4847	&	1036976	&	1.32	&	4.76E-14	&	0.0032	&	1.866	&	15.819	&	8.48	\\
51460	&	941098	&	5785	&	1246734	&	1.32	&	1.82E-14	&	0.0040	&	2.551	&	26.073	&	10.22	\\
62722	&	1150150	&	7043	&	1523614	&	1.32	&	1.77E-14	&	0.0039	&	3.498	&	42.820	&	12.24	\\
73372	&	1346148	&	8233	&	1795402	&	1.33	&	2.36E-14	&	0.0050	&	4.545	&	41.384	&	9.11	\\
86866	&	1593288	&	9739	&	2121702	&	1.33	&	3.34E-14	&	0.0052	&	6.509	&	77.948	&	11.98	\\
99460	&	1825198	&	11145	&	2431356	&	1.33	&	2.86E-14	&	0.0058	&	6.811	&	74.425	&	10.93	\\
117391	&	2151318	&	13144	&	2859160	&	1.33	&	2.97E-14	&	0.0066	&	9.315	&	127.044	&	13.64	\\
132352	&	2429104	&	14813	&	3233876	&	1.33	&	3.00E-14	&	0.0076	&	11.119	&	116.803	&	10.51	\\
145864	&	2668852	&	16321	&	3543900	&	1.33	&	4.65E-14	&	0.0103	&	12.530	&	193.427	&	15.44	\\
183076	&	3362662	&	20469	&	4473160	&	1.33	&	5.89E-14	&	0.0126	&	17.807	&	375.890	&	21.11	\\
207226	&	3807856	&	23159	&	5056400	&	1.33	&	3.51E-14	&	0.0141	&	22.549	&	411.158	&	18.23	\\
221359	&	4080710	&	24736	&	5423956	&	1.33	&	4.35E-14	&	0.0154	&	25.953	&	360.559	&	13.89	\\
243286	&	4476336	&	27179	&	5959142	&	1.33	&	4.98E-14	&	0.0173	&	64.222	&	377.703	&	5.88	\\
322522	&	5937362	&	36003	&	7907618	&	1.33	&	6.25E-14	&	0.0238	&	62.839	&	1206.523	&	19.20	\\
358489	&	6593518	&	40006	&	8741244	&	1.33	&	5.94E-14	&	0.0259	&	132.936	&	1296.813017	&	9.76	\\
374179	&	6886288	&	41756	&	9182944	&	1.33	&	8.30E-14	&	0.0280	&	74.996	&	1292.916187	&	17.24	\\
394189	&	7267594	&	43986	&	9675046	&	1.33	&	7.21E-14	&	0.0284	&	72.277	&	1226.410117	&	16.97	\\
466663	&	8584006	&	52052	&	11442590	&	1.33	&		&	0.0384	&	118.439	&		&		\\
527122	&	9703388	&	58783	&	12932988	&	1.33	&		&	0.0636	&	136.858	&		&		\\
590380	&	10854476	&	65825	&	14401620	&	1.33	&		&	0.0729	&	144.462	&		&		\\
649570	&	11977796	&	72415	&	15900372	&	1.33	&		&	0.0787	&	211.806	&		&		\\
736030	&	13549470	&	82035	&	18021144	&	1.33	&		&	0.0936	&	279.488	&		&		
\end{tabular}
\end{footnotesize}
\caption{Comparison of standard direct solve and null space method for Stokes problem.}
\label{tab:stokes}
\end{table}

\begin{table}[H]\centering
\begin{footnotesize}
\begin{tabular}{r|r|r|r|r|r|r|r|r|r}
$n+1$	&	$|M|$	&	$|B|$	&	$|Z^TAZ|$	&	infl	&	diff	&	Ztime	&	NStime	&	Stime	&	speedup	\\\hline
2188	&	60270	&	257	&	72686	&	1.21	&	1.77E-15	&	0.0003	&	0.039	&	0.027	&	0.70	\\
4729	&	132406	&	546	&	159812	&	1.21	&	5.06E-15	&	0.0004	&	0.092	&	0.104	&	1.13	\\
8305	&	233862	&	950	&	282582	&	1.21	&	8.31E-15	&	0.0005	&	0.201	&	0.244	&	1.21	\\
12772	&	361190	&	1453	&	435482	&	1.21	&	1.01E-14	&	0.0007	&	0.312	&	0.703	&	2.25	\\
18517	&	526936	&	2098	&	637114	&	1.21	&	2.10E-14	&	0.0013	&	0.560	&	1.151	&	2.05	\\
26854	&	765634	&	3031	&	922412	&	1.20	&	1.28E-14	&	0.0023	&	0.952	&	2.327	&	2.44	\\
32995	&	941480	&	3720	&	1135414	&	1.21	&	6.35E-12	&	0.0017	&	1.126	&	9.156	&	8.13	\\
43078	&	1231012	&	4847	&	1481106	&	1.20	&	1.73E-14	&	0.0026	&	1.679	&	8.072	&	4.81	\\
51460	&	1471656	&	5785	&	1777292	&	1.21	&	1.74E-14	&	0.0034	&	2.476	&	13.421	&	5.42	\\
62722	&	1797106	&	7043	&	2170570	&	1.21	&	1.84E-14	&	0.0038	&	2.863	&	18.841	&	6.58	\\
73372	&	2103260	&	8233	&	2552514	&	1.21	&	3.64E-12	&	0.0046	&	4.036	&	27.165	&	6.73	\\
86866	&	2490402	&	9739	&	3018816	&	1.21	&	3.07E-14	&	0.0054	&	5.286	&	63.046	&	11.93	\\
99460	&	2852024	&	11145	&	3458182	&	1.21	&	2.03E-10	&	0.0063	&	11.722	&	64.718	&	5.52	\\
117391	&	3366344	&	13144	&	4074186	&	1.21	&	1.57E-11	&	0.0071	&	8.681	&	109.731	&	12.64	\\
132352	&	3798722	&	14813	&	4603494	&	1.21	&	8.35E-12	&	0.0080	&	10.859	&	145.626	&	13.41	\\
145864	&	4177670	&	16321	&	5052718	&	1.21	&	3.55E-14	&	0.0098	&	12.294	&	144.746	&	11.77	\\
183076	&	5255710	&	20469	&	6366208	&	1.21	&	4.28E-14	&	0.0123	&	38.428	&	206.909	&	5.38	\\
207226	&	5956162	&	23159	&	7204706	&	1.21	&	7.68E-11	&	0.0140	&	21.910	&	1089.682	&	49.73	\\
221359	&	6370840	&	24736	&	7714086	&	1.21	&	4.76E-14	&	0.0148	&	47.819	&	310.419	&	6.49	\\
243286	&	6994304	&	27179	&	8477110	&	1.21	&	1.97E-12	&	0.0159	&	30.516	&	530.607	&	17.39	\\
322522	&	9280664	&	36003	&	11250920	&	1.21	&	5.57E-14	&	0.0223	&	124.250	&	2475.251	&	19.92	\\
358489	&	10323538	&	40006	&	12471264	&	1.21	&	6.25E-14	&	0.0253	&	126.839	&	958.905	&	7.56	\\
394189	&	11350622	&	43986	&	13758074	&	1.21	&	6.91E-14	&	0.0275	&	136.018	&	1357.513	&	9.98	\\
466663	&	13424512	&	52052	&	16283096	&	1.21	&		&	0.0343	&	268.038	&		&		\\
527122	&	15174044	&	58783	&	18403644	&	1.21	&		&	0.0629	&	147.946	&		&		\\
590380	&	16981372	&	65825	&	20528516	&	1.21	&		&	0.0439	&	324.469	&		&		\\
649570	&	18703960	&	72415	&	22626536	&	1.21	&		&	0.0493	&	448.260	&		&		
\end{tabular}
\end{footnotesize}
\caption{Comparison of standard direct solve and null space method for Naiver-Stokes.}
\label{tab:ns}
\end{table}

\section{Tables for Dual-Mixed Stokes}\label{sec:Adual}
\begin{table}[h]\centering
\begin{footnotesize}
\begin{tabular}{r|r|r|r|r|r|r|r|r|r}
$n+1$	&	$|M|$	&	$|B|$	&	$|Z^TAZ|$	&	infl	&	diff	&	Ztime	&	NStime	&	Stime	&	speedup	\\\hline
5081	&	66946	&	2045	&	68764	&	1.03	&	1.48E-14	&	0.0017	&	0.108	&	0.124	&	1.15	\\
20161	&	267284	&	8078	&	274932	&	1.03	&	1.72E-14	&	0.0092	&	0.430	&	1.542	&	3.58	\\
45241	&	601830	&	18067	&	619310	&	1.03	&	3.05E-14	&	0.0098	&	1.186	&	8.180	&	6.90	\\
80321	&	1069456	&	32022	&	1101006	&	1.03	&	8.01E-14	&	0.0169	&	2.676	&	29.369	&	10.98	\\
125401	&	1671120	&	49850	&	1720576	&	1.03	&	1.17E-13	&	0.0203	&	5.600	&	60.643	&	10.83	\\
180481	&	2403874	&	70517	&	2476998	&	1.03	&	1.24E-13	&	0.0302	&	10.759	&	134.263	&	12.48	\\
245561	&	3274472	&	97532	&	3371716	&	1.03	&	2.84E-13	&	0.0452	&	16.867	&	213.088	&	12.63	\\
320641	&	4278110	&	126891	&	4406728	&	1.03	&	4.89E-13	&	0.0593	&	25.349	&	488.083	&	19.25	\\
405721	&	5410736	&	161950	&	5570722	&	1.03	&	2.45E-13	&	0.0846	&	44.851	&	818.019	&	18.24	\\
500801	&	6691372	&	199948	&	6890864	&	1.03	&	5.73E-13	&	0.1006	&	55.618	&	1138.317	&	20.47	\\
605881	&	8088772	&	242266	&	8326976	&	1.03	&	3.75E-13	&	0.1305	&	83.160	&	1657.527	&	19.93	\\
720961	&	9622302	&	284017	&	9913640	&	1.03	&	7.89E-13	&	0.1520	&	105.158	&	2926.810	&	27.83			
\end{tabular}
\end{footnotesize}
\caption{Comparison of standard direct solve and null space method for dual-mixed Stokes on a square domain.}
\label{tab:dmstokes}
\end{table}

\begin{table}[h]\centering
\begin{footnotesize}
\begin{tabular}{r|r|r|r|r|r|r|r|r|r}
$n+1$	&	$|M|$	&	$|B|$	&	$|Z^TAZ|$	&	infl	&	diff	&	Ztime	&	NStime	&	Stime	&	speedup	\\\hline
11359	&	148566	&	4668	&	152454	&	1.03	&	4.01E-13	&	0.0021	&	0.156	&	0.299	&	1.91	\\
26265	&	346440	&	10680	&	356004	&	1.03	&	1.95E-12	&	0.0057	&	0.517	&	2.141	&	4.14	\\
46421	&	614610	&	18767	&	631998	&	1.03	&	7.36E-12	&	0.0101	&	0.854	&	6.046	&	7.08	\\
72727	&	965192	&	29323	&	992846	&	1.03	&	1.80E-11	&	0.0129	&	1.402	&	14.981	&	10.68	\\
106360	&	1413945	&	42803	&	1454811	&	1.03	&	2.17E-11	&	0.0189	&	2.617	&	42.199	&	16.12	\\
139616	&	1857691	&	56146	&	1911593	&	1.03	&	5.05E-11	&	0.0234	&	3.889	&	66.351	&	17.06	\\
186072	&	2478247	&	74740	&	2550511	&	1.03	&	6.20E-11	&	0.0334	&	6.124	&	143.892	&	23.50	\\
234428	&	3124189	&	94049	&	3215679	&	1.03	&	8.54E-11	&	0.0458	&	8.915	&	248.958	&	27.93	\\
288984	&	3853383	&	115940	&	3966367	&	1.03	&	1.34E-10	&	0.0528	&	10.886	&	303.924	&	27.92	\\
359490	&	4796221	&	144148	&	4937215	&	1.03	&	3.78E-10	&	0.0694	&	15.655	&	430.508	&	27.50	\\
411446	&	5490449	&	164916	&	5652063	&	1.03	&	3.41E-10	&	0.0844	&	24.274	&	635.733	&	26.19	\\
490102	&	6542539	&	196409	&	6735371	&	1.03	&	4.32E-10	&	0.0979	&	24.919	&	1099.457	&	44.12	\\
567858	&	7582451	&	227499	&	7806219	&	1.03	&	4.86E-10	&	0.1194	&	27.873	&	1281.732	&	45.99	\\
656314	&	8765865	&	262910	&	9024771	&	1.03	&	5.87E-10	&	0.1399	&	38.662	&	1604.043	&	41.49	
\end{tabular}
\end{footnotesize}
\caption{Comparison of standard direct solve and null space method for dual-mixed Stokes on an irregular domain.}
\label{tab:dmstokesa}
\end{table}

\bibliographystyle{siam}

\bibliography{../../refs}

\end{document}